\newcounter{dummy} \numberwithin{dummy}{section}
\newtheorem{theorem}[dummy]{Theorem}
\newtheorem{proposition}[dummy]{Proposition}
\newtheorem{lemma}[dummy]{Lemma}
\newtheorem{corollary}[dummy]{Corollary}
\newtheorem{remark}[dummy]{Remark}
\let\tempremark\remark
\renewcommand{\remark}{\tempremark\normalfont}
\def\R{\mathbb{R}}
\def\Z{\mathbb{Z}}
\def\P{\mathbb{P}}
\def\E{\mathbb{E}}
\def\1{\mathbf{1}}
\def\({\left(}
\def\){\right)}
\def\mmid{\,\middle|\,}
\def\longeqpad{\hphantom{xxxxxxxxx}}
\def\taucov{\tau_{\text{cov}}}
\def\tcov{t_{\text{cov}}}
\def\tauhit{\tau_{\text{hit}}}
\def\thit{t_{\text{hit}}}
\def\Reff{R_{\text{eff}}}
\def\ctot{c_{\text{tot}}}
\def\L{\mathcal{L}}
\newcommand{\laweq}{\stackrel{\text{law}}{=}}
\begin{document}

\title{Exponential concentration of cover times}
\author{Alex Zhai \\ Stanford University}
\maketitle

\begin{abstract}
  We prove an exponential concentration bound for cover times of
  general graphs in terms of the Gaussian free field, extending the
  work of Ding, Lee, and Peres \cite{DLP12} and Ding \cite{D14}. The
  estimate is asymptotically sharp as the ratio of hitting time to
  cover time goes to zero.

  The bounds are obtained by showing a stochastic domination in the
  generalized second Ray-Knight theorem, which was shown to imply
  exponential concentration of cover times by Ding in \cite{D14}. This
  stochastic domination result appeared earlier in a preprint of Lupu
  \cite{L14}, but the connection to cover times was not mentioned.
\end{abstract}

\section{Introduction}

Let $G = (V, E)$ be an undirected graph, possibly with self-loops and
multiple edges. For the continuous time simple random walk on $G$
started at a given vertex $v_0 \in V$, define $\taucov$ to be the
first time that all the vertices in $V$ have been visited at least
once. This quantity, known as the \emph{cover time}, is of fundamental
interest in the study of random walks.

Another fundamental object in the study of random walks on graphs is
the \emph{Gaussian free field} (GFF). For purposes of stating our main
result, let us define the GFF $\{ \eta_x \}_{x \in V}$ on $G$ with
$\eta_{v_0} = 0$ to be the Gaussian process given by covariances
$\E(\eta_x - \eta_y)^2 = \Reff(x, y)$, where $\Reff$ denotes effective
resistance. More background is given in Section \ref{sec:prelim}.

Our main result is the following concentration bound on the cover time
in terms of the Gaussian free field.

\begin{theorem} \label{thm:cover-concentration}
  Let $G = (V, E)$ be an undirected graph with a specified initial
  vertex $v_0 \in V$. Let $\{ \eta_x \}_{x \in V}$ be the Gaussian
  free field on $G$ with $\eta_{v_0} = 0$. Define the quantities
  \[ M = \E \max_{x \in V} \eta_x , \;\; R = \max_{x, y \in V} \Reff(x, y) = \max_{x, y \in V} \E \( \eta_x - \eta_y \)^2. \]
  Then, there are universal constants $c$ and $C$ such that for the
  continuous time random walk started at $v_0$, we have
  \[ \P\( \Big| \taucov - |E|M^2 \Big| \ge |E| (\sqrt{\lambda R} \cdot M + \lambda R) \) \le Ce^{-c \lambda} \]
  for any $\lambda \ge C$.
\end{theorem}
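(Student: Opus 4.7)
The plan is to follow the two-step strategy sketched in the abstract: combine Ding's \cite{D14} reduction, which converts a pointwise (``stochastic domination'') form of the generalized second Ray-Knight isomorphism into exponential concentration of cover times, with Lupu's \cite{L14} loop-soup coupling, which supplies exactly such a pointwise identity.

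Write $L^x_t$ for the local time at $x$ and $\tau(s) := \inf\{t : L^{v_0}_t \ge s\}$ for the inverse local time at $v_0$, so that $\taucov = \tau(T)$ with $T := \inf\{s : L^x_{\tau(s)} > 0 \text{ for every } x \in V\}$. The classical second Ray-Knight isomorphism reads
\[
  \(L^x_{\tau(s)} + \tfrac{1}{2}\,\eta_x^2\)_{x \in V} \;\laweq\; \(\tfrac{1}{2}(\eta'_x + \sqrt{2s})^2\)_{x \in V}
\]
for a GFF $\eta'$ independent of $\eta$ and of the walk. Lupu upgrades this to a coupling in which $2 L^x_{\tau(s)} + \eta_x^2 = (\eta'_x + \sqrt{2s})^2$ holds pointwise and, in addition, the sign of $\eta'_x + \sqrt{2s}$ matches the sign of $\eta_x$ wherever $L^x_{\tau(s)} = 0$. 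Sign-coherence then identifies the cover event with the explicit inequality $\sqrt{2s} > \max_x (|\eta_x| - \eta'_x)$, provided one rules out the ``wrong-sign'' branch $\eta'_x + \sqrt{2s} < -|\eta_x|$; the latter fails only on an event of probability $\le C e^{-c \lambda}$ by Borell--TIS applied to $\max_x |\eta'_x|$. Hence
\[
  T = \tfrac{1}{2}\Big(\max_{x \in V}(|\eta_x| - \eta'_x)\Big)_+^2.
\]

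The process $(|\eta_x| - \eta'_x)_{x \in V}$ is a centered Gaussian field with $L^2$-increments bounded by $\sqrt{2R}$, so Borell--TIS yields $\P(|M' - \E M'| \ge t) \le 2 e^{-t^2/(4R)}$ where $M' := \max_x(|\eta_x| - \eta'_x)$. Reflection symmetry of centered Gaussians and independence of $\eta'$ give $\E M' = M + O(\sqrt{R})$. Expanding $(M+u)^2 - M^2 = 2Mu + u^2$ with $u \asymp \sqrt{\lambda R}$ then produces the two-term deviation $M\sqrt{\lambda R} + \lambda R$ for $T$ around $M^2/2$. Finally, $\tau$ is a subordinator with $\E \tau(s) = 2|E| s$ (in the paper's normalization) and Poisson-type fluctuations of order $\sqrt{|E| s \lambda}$ with probability $1 - e^{-c\lambda}$; at $s \asymp M^2$ these are dominated by $|E|(M\sqrt{\lambda R} + \lambda R)$ after straightforward bookkeeping. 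Multiplying by $2|E|$ converts the concentration of $T$ around $M^2/2$ into concentration of $\taucov$ around $|E| M^2$ at the claimed scale $|E|(\sqrt{\lambda R}\, M + \lambda R)$.

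\textbf{Main obstacle.} The substantive content is Lupu's sign-coherent coupling, whose proof uses the Markov loop-soup representation of $\tfrac{1}{2}\eta^2$ at intensity $1/2$; without signs the Ray-Knight identity is only distributional, and one can only recover Ding--Lee--Peres's matching $\E\taucov \asymp |E| M^2$ \cite{DLP12} rather than a concentration estimate. Given the coupling, the remainder is a combination of Borell--TIS and subordinator bookkeeping; the main care needed is absorbing the ``wrong-sign'' event, the subordinator fluctuations, and the $O(\sqrt{R})$ error in $\E M' = M + O(\sqrt{R})$ into the single deviation budget $|E|(\sqrt{\lambda R}\,M + \lambda R)$.
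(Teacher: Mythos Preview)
Your plan misreads Lupu's coupling, and this leads to the wrong centering. In any coupling realizing the isomorphism pointwise, $\eta'$ is \emph{not} independent of $(\eta,L)$: the identity forces $|\eta'_x+\sqrt{2s}|\ge|\eta_x|$ almost surely, which is impossible for independent GFFs. Moreover, the coupling is built for a \emph{fixed} $s$, with $\eta'$ determined (up to signs) by $\eta$ and the occupation field at that level; there is no single $\eta'$ valid across all $s$, so your formula $T=\tfrac12\big(\max_x(|\eta_x|-\eta'_x)\big)_+^2$ has no meaning as written. Even if one grants independence, the estimate $\E M'=M+O(\sqrt{R})$ is false: since $|\eta_x|-\eta'_x=\max(\eta_x-\eta'_x,\,-\eta_x-\eta'_x)$ and each of $(\pm\eta-\eta')/\sqrt{2}$ has the law of a GFF, one obtains $\E M'=\sqrt{2}\,M+O(\sqrt{R})$, which would yield $\taucov\approx 2|E|M^2$, off by a factor of $2$ from the correct centering. (Also, $(|\eta_x|-\eta'_x)_x$ is not a Gaussian field, so Borell--TIS does not apply as stated.)

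The paper never seeks a formula for $T$. Instead it fixes deterministic levels $t^\pm=\tfrac12(M\pm\beta\sqrt{R})^2$ and shows $\tau^+(t^-)\le\taucov\le\tau^+(t^+)$ except with probability $Ce^{-c\beta^2}$. The upper inclusion uses only the \emph{distributional} isomorphism: if not covered then some $L^{x_0}=0$, whence $\min_x\big(L^x+\tfrac12\eta_x^2\big)\le\tfrac12\eta_{x_0}^2$, which is unlikely by the isomorphism and Gaussian concentration of $\min_x(\eta_x+\sqrt{2t^+})$ together with a tail bound on $\eta_{x_0}^2$. The lower inclusion is where the stochastic domination (Theorem~\ref{thm:domination}, i.e.\ Lupu's input) enters: $\min_x L^x_{\tau^+(t^-)}>0$ implies $\min_x(\eta_x+\sqrt{2t^-})>0$, i.e.\ $\min_x\eta_x>-M+\beta\sqrt{R}$, which is unlikely by Gaussian concentration of $\min_x\eta_x$ around $-M$. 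Only one GFF $\eta$ appears and no $|\eta_x|$ term arises; this is precisely why the centering is $M$ rather than $\sqrt{2}\,M$. Your subordinator step is essentially the paper's Lemma~\ref{lem:inverse-local-time-concentration}, and that part is fine.
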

\begin{remark}
  Our result is most easily stated for a continuous time random walk,
  i.e. a random walk having the same jump probabilities as a simple
  random walk, but whose times between jumps are i.i.d. unit
  exponentials. However, note that if a continuous time random walk
  has run for time $t$, then the number of jumps it has made has
  Poisson distribution with mean $t$, which exhibits Gaussian
  concentration with fluctuations of order $\sqrt{t}$. Thus, Theorem
  \ref{thm:cover-concentration} can be easily translated into a
  similar bound for discrete random walks.
\end{remark}
\begin{remark}
  Note that the definition of $M$ is given in terms of a starting
  vertex $v_0$, but it does not depend on $v_0$. Indeed, let $v'_0$ be
  another starting vertex. Then, $\eta' = \{ \eta_x - \eta_{v'_0}
  \}_{x \in V}$ has the law of a GFF with $\eta'_{v'_0} = 0$, and
  \[ \E \max_{x \in V} \eta'_x = \E \max_{x \in V} \eta_x. \]
\end{remark}
\begin{remark}
  We actually show Theorem \ref{thm:cover-concentration} in the
  slightly more general setting of electrical networks, which are
  introduced in Section \ref{sec:prelim}.
\end{remark}

We prove Theorem \ref{thm:cover-concentration} following the approach
first appearing in a paper of Ding, Lee, and Peres \cite{DLP12} and
later refined by Ding \cite{D14}. Indeed, Ding observed that Theorem
\ref{thm:cover-concentration} is implied by a certain stochastic
domination; in \cite{D14}, the domination was proved for trees, but
the general case was left as conjecture (\cite{D14}, Question 5.2). We
establish Theorem \ref{thm:cover-concentration} by proving the
stochastic domination for general graphs.

In relation to these previous works, Theorem
\ref{thm:cover-concentration} extends Theorem 1.2 of \cite{D14}, which
gave the same concentration bound for trees. It also sharpens Theorem
1.1 of \cite{DLP12}, where the equivalence of cover times and $|E|M^2$
(in the notation of Theorem \ref{thm:cover-concentration}) was proven
up to a universal multiplicative constant. By ``sharpen'', we mean
that we are able to remove the constant factor under the assumption
$\sqrt{R} \ll M$. We mention that this was done already for
bounded-degree graphs in Theorem 1.1 of \cite{D14}, albeit without
exponential tail bounds.

The condition $\sqrt{R} \ll M$ is a relatively mild one. Indeed,
define $\tauhit(x, y)$ to be the time it takes for a random walk
started at $x$ to hit $y$, and define
\[ \thit = \max_{x, y \in V} \E \tauhit(x, y), \;\; \tcov = \max_{x \in V} \E_x \taucov, \]
where in the definition of $\tcov$, $\E_x$ denotes the expectation for
the random walk started at $x$. The well-known commute time identity
(\cite{LPW09}, Proposition 10.6) states that
\[ \E \tauhit(x, y) + \E \tauhit(y, x) = 2 |E| \cdot \Reff(x, y). \]
It follows that
\[ \thit \ge |E| \cdot R. \]
On the other hand, it was shown in \cite{DLP12} that $|E| \cdot M^2$
is within a constant of $\tcov$. It follows that for some constant
$C$,
\[ \frac{R}{M^2} \le C \cdot \frac{\thit}{\tcov}, \]
so $\sqrt{R} \ll M$ holds whenever $\thit \ll \tcov$. We obtain the
following corollary.

\begin{corollary} \label{cor:cover-concentration}
  Let $G = (V, E)$, $v_0$, $\eta$, $M$, and $R$ be as in Theorem
  \ref{thm:cover-concentration}. Then,
  \[ \( 1 - C \sqrt{\frac{\thit}{\tcov}} \) \cdot |E| \cdot M^2 \;\; \le \;\tcov\; \le \;\; \( 1 + C \sqrt{\frac{\thit}{\tcov}} \) \cdot |E| \cdot M^2, \]
  for a universal constant $C$.
\end{corollary}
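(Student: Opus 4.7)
The plan is to integrate the tail bound from Theorem \ref{thm:cover-concentration} into a bound on the mean of $\taucov$, and then convert $R$ and $M$ into $\thit$ and $\tcov$ using the two inequalities already recorded in the preceding discussion: the commute time identity gives $\thit \ge |E| R$, and by \cite{DLP12} the quantity $|E| M^2$ is comparable to $\tcov$ up to universal constants.

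First I would establish, for \emph{every} starting vertex $x$, the bound
\[ |\E_x \taucov - |E| M^2| \le C |E|(M \sqrt{R} + R). \]
By the second remark following Theorem \ref{thm:cover-concentration}, the quantity $|E| M^2$ is unchanged when the starting vertex is moved from $v_0$ to $x$, so Theorem \ref{thm:cover-concentration} applies to the walk from $x$ with the same center. To pass from the tail bound to a bound on the mean, write
\[ \E_x |\taucov - |E| M^2| = \int_0^\infty \P_x\(|\taucov - |E| M^2| \ge t\) \, dt, \]
split at $t_0 = |E|(\sqrt{CR}\, M + CR)$ (the value corresponding to $\lambda = C$), and on the tail change variables via $t = |E|(\sqrt{\lambda R}\, M + \lambda R)$ so that $dt = |E|(M \sqrt{R/(4\lambda)} + R) \, d\lambda$. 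The tail integral reduces to $\int_C^\infty C e^{-c\lambda} |E|(M\sqrt{R/(4\lambda)} + R)\, d\lambda = O(|E|(M\sqrt{R} + R))$, and the integral on $[0,t_0]$ is trivially bounded by $t_0$, which is of the same order. Taking a maximum over $x$ yields the same bound for $|\tcov - |E| M^2|$, since the error is uniform in the starting vertex.

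Second, I would plug in $R \le \thit/|E|$ (commute time identity) and $M^2 \ge c \tcov/|E|$ (from \cite{DLP12}) to get $R/M^2 \le C \thit/\tcov$, hence $\sqrt{R}/M \le C \sqrt{\thit/\tcov}$. Using the trivial inequality $\thit \le \tcov$ (since $\tauhit(x,y) \le \taucov$ when started at $x$), we also get $R/M^2 \le C \sqrt{\thit/\tcov}$. Dividing the first-step bound by $|E| M^2$ gives
\[ \left| \frac{\tcov}{|E| M^2} - 1 \right| \le C\(\frac{\sqrt{R}}{M} + \frac{R}{M^2}\) \le C' \sqrt{\frac{\thit}{\tcov}}, \]
which is exactly Corollary \ref{cor:cover-concentration}.

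The only substantive step is the tail integration, which is routine. The subtlety worth flagging is that $M$ and hence the center $|E| M^2$ do not depend on the starting vertex; this is what allows Theorem \ref{thm:cover-concentration} to be applied uniformly in $x$ so as to control $\tcov = \max_x \E_x \taucov$ around a single value.
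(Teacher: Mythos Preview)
Your proposal is correct and matches the paper's intended argument: the paper does not give a separate proof of the corollary but simply states it after deriving $R/M^2 \le C\,\thit/\tcov$ from the commute time identity and \cite{DLP12}, treating the passage from the tail bound of Theorem~\ref{thm:cover-concentration} to the mean as routine. You have filled in exactly those implicit details---the tail integration, the use of $\thit \le \tcov$ to absorb the $R/M^2$ term, and the observation (via the remark after Theorem~\ref{thm:cover-concentration}) that $M$ and $R$ are independent of the starting vertex so that the bound transfers to $\tcov = \max_x \E_x \taucov$.
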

\begin{remark}
  There is a deterministic polynomial-time approximation scheme (PTAS)
  due to Meka \cite{M12} for computing the supremum of a Gaussian
  process. Applying this to the quantity $M$ gives a PTAS for $\tcov$
  when $\thit \ll \tcov$.
\end{remark}

Conversely, it was shown by Aldous \cite{A91} that if $\thit$ is of
the same order as $\tcov$, then the cover time cannot be concentrated
about its expectation (see the introduction of \cite{D14} for a more
detailed discussion).

The main tool in estimating cover times employed in \cite{DLP12} and
\cite{D14} is the generalized second Ray-Knight theorem, which is an
identity in law relating the Gaussian free field to the time spent at
each vertex by a continuous time random walk. In fact, the upper bound
on $\tcov$ in Corollary \ref{cor:cover-concentration} was previously
established as Theorem 1.4 of \cite{DLP12} (the same argument also
proves the corresponding upper tail estimate in Theorem
\ref{thm:cover-concentration}).

In \cite{D14}, the matching lower bound was reduced to proving a
certain stochastic domination in the generalized second Ray-Knight
theorem. There, the stochastic domination was proven only for trees
(\cite{D14}, Theorem 2.3), but it was asked whether the same holds for
general graphs (\cite{D14}, Question 5.2).

Indeed, in Section \ref{sec:domination} we prove Theorem
\ref{thm:domination}, which generalizes Theorem 2.3 of \cite{D14} to
arbitrary graphs. This is accomplished by viewing the random walk as
Brownian motion on a metric graph. After writing up an early draft of
the proof, it was pointed out to us that this idea appeared previously
in a recent preprint of Lupu \cite{L14} to prove essentially the same
result (\cite{L14}, Theorem 3). In that context, the idea was mainly
used to study the percolation of loop clusters (\cite{L14}, Theorems 1
and 2; see also subsequent work by Sznitman \cite{S14}). However, the
application to cover times was not mentioned.

Even though Theorem \ref{thm:domination} uses the same ideas as
Theorem 3 of \cite{L14}, we include a proof in order to establish the
result in the language of our specific application. Additionally, our
exposition is intended to be more accessible to audiences interested
in cover times of random walks.

\subsection{Related work on cover times}

Cover times have been studied in many papers over the last few
decades. We highlight several of them below; see also \S 1.1 of
\cite{DLP12} for further background.

We first mention some results relating cover times and hitting
times. Clearly, $\tcov \ge \thit$. A classical result of Matthews
\cite{M88} is that on a graph of $n$ vertices, $\tcov \le \thit (1 +
\log n)$. This was proved by a clever argument analogous to the
analysis of the coupon collector's problem. Matthews also gave an
expression for a lower bound, which was later shown by Kahn, Kim,
Lovasz, and Vu \cite{KKLV02} to approximate the cover time to within
$(\log \log n)^2$.

In \cite{A91}, Aldous analyzed a generalization of the coupon
collector's problem. As a consequence, he showed that $\taucov$ is
concentrated around its expectation with high probability as
$\frac{\thit}{\tcov} \rightarrow 0$. More precisely, for any $\epsilon
> 0$, there is a small enough $\delta$ so that
\[ \P \( |\taucov - \tcov| \le \epsilon \tcov \) \ge 1 - \epsilon \]
whenever $\frac{\thit}{\tcov} < \delta$. This shows qualitatively the
concentration of cover times.

On the other hand, cover times have also been estimated for many
specific classes of graphs, including regular graphs \cite{JLNS89},
lattices \cite{Z92}, and bounded degree planar graphs \cite{JS00}, to
name a few. Precise asymptotics are known for the two-dimensional
discrete torus \cite{DPRZ04} and regular trees \cite{A91b}.

More recently, a breakthrough was made by Ding, Lee, and Peres
\cite{DLP12} whereby the cover time was given (up to a constant
factor) in terms of the Gaussian free field. Their result gives in
some sense a quantitative estimate of the cover time that works for
any graph. As touched upon earlier, Ding \cite{D14} later removed the
constant factor for trees and bounded degree graphs. We complete the
picture by extending this to general graphs.

\subsection{Outline}

The remaining sections are organized as follows. In Section
\ref{sec:prelim}, we establish notation and provide a brief review of
electrical networks, local times, Gaussian free fields, and the
generalized second Ray-Knight theorem. The notation mostly follows
\cite{D14}. Section \ref{sec:domination} is devoted to proving the
aforementioned stochastic domination in the form of Theorem
\ref{thm:domination}. This is very similar to Theorem 3 of \cite{L14};
nevertheless, we include a proof in the notation of our setting. In
Section \ref{sec:cover-times}, we apply Theorem \ref{thm:domination}
to cover times to obtain Theorem \ref{thm:cover-concentration}. The
final section contains acknowledgements.

\section{Definitions and preliminaries} \label{sec:prelim}

An \emph{electrical network} $G$ is a finite, undirected graph $(V,
E)$, allowing self-loops, together with positive weights on the edges
called \emph{conductances}. We use either $c_{xy}$ or $c_{yx}$ to
denote the conductance of an edge $(x, y)$, and for vertices $x, y \in
V$ that do not share an edge, we define $c_{xy} = 0$. It is convenient
to define the quantity $c_x = \sum_{y \in V} c_{xy}$, which we refer
to as the \emph{total conductance} at $x$.

The name ``electrical network'' comes from the fact that $G$ can be
used to model an electric circuit, where each edge $(x, y)$
corresponds to placing a resistor with resistance $\frac{1}{c_{xy}}$
between vertices $x$ and $y$. For any $x, y \in G$, we can define the
\emph{effective resistance} $\Reff(x, y)$ between $x$ and $y$ to be
the physical resistance when a voltage is applied between $x$ and
$y$. Mathematically, this quantity can be defined as a certain minimum
energy (see Chapter 9 of \cite{LPW09} for more background on effective
resistance and electrical networks).

There is a canonical \emph{discrete time random walk} on an electrical
network defined by taking the transition probability from $x$ to $y$
to be $\frac{c_{xy}}{c_x}$. In the case where the non-zero
conductances are all equal, this reduces to the simple random walk on
the underlying graph.

We will also want to consider the \emph{continuous time random walk}
on an electrical network. This is a continuous time process $\{ X_t
\}_{t \in \R^+}$ which can be sampled by having the same transition
probabilities as the discrete time walk but introducing unit
exponential waiting times between transitions. (Contrast this with the
discrete time random walk, which we can think of as having waiting
times that are deterministically equal to $1$.)

In what follows, unless otherwise specified, all the electrical
networks we consider will have a distinguished vertex $v_0 \in V$, and
all random walks will be assumed to start at $v_0$.

\subsection{Local times} \label{subsec:local-times}

Let $X = \{ X_t \}_{t \in \Z^+}$ be a discrete time random walk on an
electrical network $G$. For each time $t$ and vertex $v$, we define
the quantity
\[ L^X_t(v) = \sum_{i = 0}^t \1_{\{ X_i = v \}}, \]
which counts the number of visits of $X$ to $v$ up to time $t$.

We also define a continuous analogue of $L^X_t(v)$. Suppose now that
$X = \{ X_t \}_{t \in \R^+}$ is a continuous time random walk on
$G$. For any time $t \ge 0$ and vertex $v \in V$, we define the
\emph{local time} $\L^X_t(v)$ of $X$ at $v$ to be
\[ \L^X_t(v) = \frac{1}{c_v} \int_0^t \1_{\{ X_s = v \}} ds. \]
Note the factor of $\frac{1}{c_v}$; this is a convenient normalization
for various formulas. When there is no risk of confusion about the
random walk $X$, we will sometimes shorten the notation to $L_t(v)$ or
$\L_t(v)$.

Clearly, the cover time is related to the local time; it is the first
time that all local times are positive. For a continuous time random
walk $X$, we have
\[ \taucov = \inf \left\{ t \ge 0 : \min_{x \in V} \L^X_t(x) > 0 \right\}. \]
We will also frequently consider the first time that $v_0$ accumulates
a certain amount of local time. We give a formal definition for this
stopping time. For a continuous time random walk $X$ and any $t > 0$,
define the \emph{inverse local time} $\tau^+(t)$ as
\[ \tau^+(t) = \inf \{ s \ge 0 : \L^X_s(v_0) \ge t \}, \]
It will always be clear what $X$ is, so it is not included in the
notation for sake of brevity.

\subsection{Gaussian free fields} \label{subsec:gff}

For an electrical network $G = (V, E)$, the Gaussian free field
$\eta_S$ with boundary $S \subset V$ is defined to be a random
variable taking values in the set $\R^{V \setminus S}$ of real-valued
functions on $V \setminus S$. Its probability density at an element $f
\in \R^{V \setminus S}$ is proportional to
\begin{equation} \label{eq:gff-def}
\exp \( - \frac{1}{4} \sum_{x, y \in V} c_{xy}(f(x) - f(y))^2 \),
\end{equation}
\noindent where we define $f(x) = 0$ for each $x \in S$. For our
purposes, Gaussian free fields will always have boundary $S = \{ v_0
\}$. Thus, if we refer to \emph{the} Gaussian free field on some
network, we will mean the one with this boundary, and we will drop the
subscript $S$.

From (\ref{eq:gff-def}) it is clear that $\eta$ is a multidimensional
Gaussian random variable. It is not too hard to calculate (e.g.,
Theorem 9.20 of \cite{J97}) that for all $x, y \in V$,
\[ \E\( \eta_x - \eta_y \)^2 = \Reff(x, y), \]
which confirms that our definition of the GFF is consistent with the
one given in the introduction. Noting that $\eta_{v_0} = 0$,
the above formula completely determines the correlations of $\eta$ in
terms of the effective resistances.

The Gaussian free field comes into the picture via a class of
identities known as Isomorphism Theorems. The first such theorems were
proved independently by Ray \cite{R63} and Knight \cite{K63} relating
the local times of Brownian motion to a $2$-dimensional Bessel
process. More generally, it turns out that for any strongly symmetric
Borel right process, there is an identity relating its local times to
an associated Gaussian process.

Inspired by formulas of Symanzik \cite{S66} and Brydges, Fr\"ohlich,
and Spencer \cite{BFS82}, Dynkin \cite{D84} gave the first isomorphism
of this type to be expressed in terms of Gaussian free fields. Various
related identities were subsequently discovered by Marcus and Rosen
\cite{MR92}, Eisenbaum \cite{E95}, Le Jan \cite{lJ10}, Sznitman
\cite{S12}, and others. There is a nice version of the isomorphism in
the case of continuous time random walks on finite electrical
networks, first appearing in \cite{EKMRS00} (see also Theorem 8.2.2 of
the book by Marcus and Rosen \cite{MR06}).

\begin{theorem}[Generalized Second Ray-Knight Theorem] \label{thm:second-ray-knight}
  Let $G = (V, E)$ be an electrical network, with a given vertex $v_0
  \in V$. Let $X = \{X_t\}_{t \ge 0}$ be a continuous time random walk
  on $G$, and for any $t > 0$, define $\tau^+(t) = \inf \{ s \ge 0 :
  \L^X_s(v_0) \ge t \}$ to be the first time that $v_0$ accumulates
  local time $t$. Then, we have

  \[ \left\{ \L^X_{\tau^+(t)}(x) + \frac{1}{2} \eta_x^2 \right\}_{x \in V} \laweq \left\{ \frac{1}{2}\(\eta_x + \sqrt{2t}\)^2 \right\}_{x \in V}. \]
\end{theorem}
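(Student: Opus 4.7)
My plan is to verify the identity in law by matching the joint Laplace transforms of both sides, taking the GFF $\eta$ to be independent of the random walk $X$. Fix nonnegative parameters $\lambda = (\lambda_x)_{x \in V}$, set $D = \text{diag}(\lambda)$, and let $\Gamma$ denote the Green function of $X$ killed on hitting $v_0$; up to a harmless factor this is the covariance matrix of $\eta$ on $V \setminus \{v_0\}$. Both sides trivially agree in the $v_0$ coordinate, since $\L^X_{\tau^+(t)}(v_0) = t$ and $\eta_{v_0} = 0$, so the content of the theorem is on $V \setminus \{v_0\}$.

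For the right-hand side, $\eta + \sqrt{2t} \cdot \1$ is Gaussian with mean $\sqrt{2t} \cdot \1$ and covariance $\Gamma$. Completing the square in
\[ \E \exp\!\left( -\tfrac{1}{2} \sum_x \lambda_x (\eta_x + \sqrt{2t})^2 \right) \]
gives an explicit expression of the form $\det(I + \Gamma D)^{-1/2} \cdot \exp(-t \cdot \1^T A_\lambda \1)$ for a matrix $A_\lambda$ built from $\Gamma$ and $D$ in closed form.

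For the left-hand side, independence of $\eta$ and $X$ factors the Laplace transform. The $\eta$-factor is again $\det(I + \Gamma D)^{-1/2}$. I would evaluate the walk factor by excursion theory from $v_0$: excursions of $X$ from $v_0$ form a Poisson point process clocked by $\L^X_{\cdot}(v_0)$, so
\[ \E \exp\!\left( -\sum_x \lambda_x \L^X_{\tau^+(t)}(x) \right) = \exp(-t \cdot \kappa(\lambda)), \]
where $\kappa(\lambda)$ is the intensity of excursions that fail to survive Feynman--Kac killing with potential proportional to $\lambda_x$ at $x$. A standard Kac computation expresses $\kappa(\lambda)$ via the killed resolvent, and hence through $\Gamma$ and $(I + \Gamma D)^{-1}$.

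The two sides are then equated by checking the linear-algebraic identity $\1^T A_\lambda \1 = \kappa(\lambda)$, which follows from the resolvent identity linking the killed Green function to the graph Laplacian. The main obstacle I foresee is purely bookkeeping: aligning the $\frac{1}{c_v}$ normalization in the definition of $\L^X$ with the conductance-weighted killing rates, and keeping the $v_0$-conditioning consistent between the Gaussian and excursion sides. A cleaner alternative, hinted at in the introduction and used by Lupu, is to embed $X$ as Brownian motion on a metric graph and patch together the classical Ray--Knight theorem for Brownian motion edge by edge; this trades the combinatorial bookkeeping for setting up the metric-graph formalism.
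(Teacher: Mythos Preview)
The paper does not prove this theorem. Theorem~\ref{thm:second-ray-knight} is stated in Section~\ref{subsec:gff} as background, with the attribution ``first appearing in \cite{EKMRS00} (see also Theorem 8.2.2 of the book by Marcus and Rosen \cite{MR06})''. There is therefore no proof in the paper to compare your proposal against.

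That said, your Laplace-transform outline is essentially the approach taken in those references: compute the joint Laplace functional of each side, factor the left side using independence of $\eta$ and $X$, evaluate the Gaussian integrals by completing the square to get the $\det(I+\Gamma D)^{-1/2}$ factor, and evaluate the local-time factor via a Kac/Feynman--Kac moment formula. The crux, as you correctly identify, is the algebraic identity matching the excursion intensity $\kappa(\lambda)$ to the quadratic form $\1^T A_\lambda \1$; in \cite{EKMRS00} this is done via explicit resolvent computations, and your ``bookkeeping'' worry about the $1/c_v$ normalization is exactly where care is needed. Your sketch is vague at precisely this step, but the strategy is sound and standard.

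One small correction: the metric-graph idea of Lupu that the paper invokes is used to prove the \emph{stochastic domination} (Theorem~\ref{thm:domination}), not the isomorphism theorem itself. Patching the classical first Ray--Knight theorem edge by edge does not directly yield the generalized second Ray--Knight identity; the metric-graph construction is instead used to lift the (already known) isomorphism to a continuous object where a sign argument becomes available.
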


\noindent For more background on isomorphism theorems, we refer the
interested reader to \cite{MR06}. See also \cite{lJ11} for information
relating Gaussian free fields to loop measures.

\subsection{Random walks on paths and the first Ray-Knight theorem} \label{subsec:path-walks}

The content of this subsection may appear somewhat unmotivated before
reading Section \ref{sec:domination}. The reader may wish to first
skim this subsection and revisit it when reading Section
\ref{subsec:discrete-local-times} where it is used.

We will need a few basic facts concerning the special case where the
underlying graph of $G$ is a path. In this setting, it is a classical
theorem proved independently by Ray and Knight that the local times of
a continuous time random walk can be related to Brownian motion.

\begin{theorem}[First Ray-Knight Theorem] \label{thm:first-ray-knight}
  For any $a > 0$, let $B_t$ be a standard one-dimensional Brownian
  motion started at $B_0 = a$, and let $T = \inf \{ t : B_t = 0
  \}$. Let $\{ W_t \}_{t \ge 0}$ be a standard two-dimensional
  Brownian motion. Then,
  \[ \Big\{ \L^{B_t}_T(x) : x \in [0, a] \Big\} \laweq \Big\{ |W_x|^2 : x \in [0, a] \Big\}, \]
  where $\L^{B_t}_T$ denotes the local time of Brownian motion.
\end{theorem}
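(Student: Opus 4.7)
The strategy is to show that, viewed as a process in the spatial variable $x$, $\{\L^B_T(x)\}_{x \in [0, a]}$ is a squared $2$-dimensional Bessel process (BESQ$(2)$) started at $0$; since $\{|W_x|^2\}_{x \ge 0}$ is exactly such a process (by Itô's formula, $d|W_x|^2 = 2\, dx + 2|W_x|\, d\beta_x$ for a standard BM $\beta$), this will yield the claimed identity in law.

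First I would apply Tanaka's formula to $(x - B_t)^+$ between $t = 0$ and $t = T$. Since $B_0 = a \ge x$ and $B_T = 0 \le x$ for $x \in [0, a]$, the formula gives
\[
\L^B_T(x) = 2x + 2\int_0^T \1_{\{B_s \le x\}}\, dB_s.
\]
Writing $Y_x := \L^B_T(x)$ and $M(x) := \int_0^T \1_{\{B_s \le x\}} \, dB_s$, we obtain $Y_0 = 0$ and $Y_x = 2x + 2M(x)$. The goal is then to argue that $M$, viewed as a process in $x$, is a continuous martingale in a suitable space-indexed filtration, with quadratic variation $\langle M \rangle_x = \int_0^x Y_y\, dy$.

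The second step is to introduce a filtration $\{\mathcal{G}_x\}_{x \ge 0}$ encoding the portion of the Brownian trajectory lying at or below level $x$ (formally obtained by a time change that excises all excursions of $B$ strictly above $x$). By standard excursion theory, the excursions of $B$ above level $x$ form a Poisson point process independent of the below-$x$ trajectory, which gives the $\mathcal{G}_x$-martingale property of $M$. The quadratic variation follows from the occupation times formula,
\[
\langle M \rangle_x = \int_0^T \1_{\{B_s \le x\}}\, ds = \int_0^x \L^B_T(y)\, dy = \int_0^x Y_y\, dy.
\]
By Dambis--Dubins--Schwarz there is a standard Brownian motion $\beta$ with $M(x) = \int_0^x \sqrt{Y_y}\, d\beta_y$, so $Y$ solves the SDE
\[
Y_x = 2x + 2\int_0^x \sqrt{Y_y}\, d\beta_y, \qquad Y_0 = 0,
\]
which is precisely the SDE for BESQ$(2)$ started at $0$. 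Pathwise uniqueness for this SDE (via Yamada--Watanabe, since $\sqrt{\cdot}$ is H\"older-$1/2$) then forces $Y \laweq |W|^2$, completing the proof.

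The main obstacle is step two: although $M(x)$ is manifestly $\mathcal{G}_x$-measurable (the integrand $\1_{\{B_s \le x\}}$ only probes the path below level $x$), verifying that $x \mapsto M(x)$ is a continuous martingale in $\mathcal{G}_x$ with the claimed bracket requires genuine input from excursion theory, because $M$ is originally defined via a time integral and the space-indexed martingale structure is not transparent from the Tanaka identity alone. Concretely, one must show that the increment $M(x + dx) - M(x) = \int_0^T \1_{\{x < B_s \le x + dx\}}\, dB_s$ is, conditionally on $\mathcal{G}_x$, a sum of independent mean-zero contributions coming from the excursions of $B$ crossing the slab $[x, x+dx]$, with total conditional variance proportional to $Y_x\, dx$; this can be done either via the excursion-process description or by a careful discretization of levels followed by a limiting argument.
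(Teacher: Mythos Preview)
The paper does not prove Theorem~\ref{thm:first-ray-knight}; it is quoted as a classical result with a reference to M\"orters--Peres \cite{MP10}, and only its discretized consequence (Corollary~\ref{cor:first-ray-knight}) is derived and used. So there is no ``paper's own proof'' to compare against.

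That said, your outline is a recognizable and essentially correct route to the theorem: the Tanaka identity $\L^B_T(x) = 2x + 2\int_0^T \1_{\{B_s \le x\}}\,dB_s$ is right (with the semimartingale normalization of local time), and identifying $Y_x = \L^B_T(x)$ as the solution of the BESQ$(2)$ SDE via the excursion filtration is the standard modern argument (going back to work of Jeulin, McGill, and Rogers--Walsh). You have correctly flagged the genuine difficulty: the statement that $x \mapsto M(x)$ is a continuous $(\mathcal{G}_x)$-martingale is nontrivial and does not follow from soft considerations---it is essentially equivalent in depth to the Ray--Knight theorem itself. If you were to flesh this out, you would need either a careful excursion-theoretic argument or the downcrossing approximation to local time combined with a level-discretization limit. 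Your invocation of Yamada--Watanabe for pathwise uniqueness of the BESQ SDE is fine.

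One small caution: make sure your local-time normalization matches the one intended in the statement. The paper never fixes a normalization for Brownian local time (it only uses the discrete version), but the identity $\L^B_T(x) \laweq |W_x|^2$ with a \emph{standard} planar Brownian motion $W$ holds for the semimartingale local time satisfying $|B_t - x| = |B_0 - x| + \int_0^t \mathrm{sgn}(B_s - x)\,dB_s + \L^B_t(x)$, which is consistent with what you wrote.
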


In Section \ref{subsec:local-times}, we did not define the local time
of Brownian motion, which requires some minor technicalities due to
the fact that it can only be defined as a density. For background on
Brownian local times and Theorem \ref{thm:first-ray-knight}, we refer
the reader to \cite{MP10}. However, we will only use a discretized
version of Theorem \ref{thm:first-ray-knight}, where we restrict our
attention to a finite set of values for $x$. This is equivalent to
replacing the Brownian motion $B_t$ with a continuous time random walk
on a path.

\begin{corollary} \label{cor:first-ray-knight}
  Let $G = (V, E)$ be an electrical network whose underlying graph is
  a path, with vertices labeled $0, 1, 2, \ldots , N$ and conductances
  $c_{k, k + 1}$ between $k$ and $k + 1$ for $0 \le k < N$. Let $X_t$
  be a continuous time random walk on $G$ started at $X_0 = N$, and
  let $T = \inf \{ t : X_t = 0 \}$. Define
  \[ a_k = \sum_{i = 0}^{k - 1} \frac{1}{c_{i, i + 1}}, \]
  and let $\{ W_t \}_{t \ge 0}$ be a standard two-dimensional Brownian
  motion. Then,

  \[ \Big\{ \L^X_T(k) : 1 \le k < N \Big\} \laweq \Big\{ |W_{a_k}|^2 : 1 \le k < N \Big\}. \]
\end{corollary}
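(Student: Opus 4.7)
The plan is to embed the path network $G$ into the interval $[0, a_N]$ by identifying vertex $k$ with the point $a_k$, and to realize the continuous time random walk $X$ on $G$ as a projection of Brownian motion on this interval. Once this correspondence is made, Corollary \ref{cor:first-ray-knight} will fall out of Theorem \ref{thm:first-ray-knight} applied to Brownian motion started at $a_N$.

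First I would invoke Theorem \ref{thm:first-ray-knight} directly with $a = a_N$: letting $B_t$ be a standard one-dimensional Brownian motion with $B_0 = a_N$ and $T_B = \inf\{t : B_t = 0\}$, we get
\[ \bigl\{ \L^{B_t}_{T_B}(a_k) : 1 \le k < N \bigr\} \laweq \bigl\{ |W_{a_k}|^2 : 1 \le k < N \bigr\}. \]
So it suffices to couple $B$ with a continuous time random walk $X$ on $G$ started at $N$ and stopped at $T = \inf\{t : X_t = 0\}$ in such a way that $\L^X_T(k) = \L^{B_t}_{T_B}(a_k)$ for each $1 \le k < N$.

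For the coupling, I would check that the jump probabilities of $X$ agree with the Brownian hitting probabilities at the vertices. For Brownian motion started at $a_k$, the probability of reaching $a_{k+1}$ before $a_{k-1}$ equals
\[ \frac{a_k - a_{k-1}}{a_{k+1} - a_{k-1}} = \frac{1/c_{k-1,k}}{1/c_{k-1,k} + 1/c_{k,k+1}} = \frac{c_{k,k+1}}{c_k}, \]
exactly the transition probability of $X$ from $k$ to $k+1$. So the sequence of distinct vertices visited by $B$ agrees in law with the sequence of vertex visits of the discrete-time walk. The waiting times at each $k$ then correspond (after normalization by $c_k$) to unit exponentials, because the excursions of $B$ away from $a_k$ have the correct exponential structure in terms of the accumulated Brownian local time at $a_k$.

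The main obstacle is verifying that under this coupling the normalized occupation time $\L^X_T(k) = \frac{1}{c_k}\int_0^T \1_{\{X_s = k\}}\,ds$ of the random walk equals the density-valued Brownian local time $\L^{B_t}_{T_B}(a_k)$. The cleanest way to resolve this is to build $X$ from $B$ as a time change: define the additive functional $A_t = \sum_{k} c_k \, \L^{B_t}_t(a_k)$, set $X_t$ to be the vertex $k$ labeling the excursion interval of $B$ containing the time with $A$-clock value $t$, and then check that $X$ has the correct generator while $\L^X_T(k)$ matches $\L^{B_t}_{T_B}(a_k)$ by construction (this is the ``metric graph'' or ``cable system'' viewpoint alluded to in the introduction and used in Section \ref{sec:domination}). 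An alternative, more pedestrian route is to compare inverse local times on both sides: the inverse of $\L^{B_t}_{\cdot}(a_k)$ decomposes into i.i.d.\ excursion durations whose sum, weighted by $c_k$, recovers the inverse of $\L^X_{\cdot}(k)$, and both processes terminate simultaneously at the hitting time of $0$.
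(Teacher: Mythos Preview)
Your proposal is correct and follows essentially the same route as the paper: embed vertex $k$ at the point $a_k \in [0, a_N]$, invoke Theorem \ref{thm:first-ray-knight} with $a = a_N$, and identify the trace of Brownian motion on $\{a_k\}$ with the continuous time walk on $G$. The paper's proof is just a terser version of your second paragraph: rather than discussing a time-change or inverse-local-time argument, it records directly the one computation that makes the coupling work, namely that for Brownian motion started at $x$ and stopped upon exiting $[x-r, x+s]$, the local time accumulated at $x$ is exponential with mean $\frac{rs}{r+s}$; with $r = 1/c_{k-1,k}$ and $s = 1/c_{k,k+1}$ this equals $1/c_k$, which is exactly the factor in the definition of $\L^X_T(k)$ applied to a unit exponential holding time.
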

\begin{proof}
  The equivalence to Theorem \ref{thm:first-ray-knight} can be seen as
  follows. For any $x \in \R$, let $B_t$ be a Brownian motion started
  at $x$ and stopped upon hitting $x - r$ or $x + s$. Then, the local
  time accumulated at $x$ is distributed as an exponential random
  variable with mean $\frac{rs}{r + s}$.

  When $x = a_k$, $r = \frac{1}{c_{k, k - 1}}$ and $s = \frac{1}{c_{k,
      k + 1}}$, this corresponds to an exponential jump time from the
  vertex $k$ in $G$, scaled by a factor of $\frac{1}{c_{k, k - 1} +
    c_{k, k + 1}}$ which appears in the definition of $\L^X_T(k)$.
\end{proof}

In light of Corollary \ref{cor:first-ray-knight}, it is useful to know
something about two-dimensional Brownian motion. For our purposes, we
need the following estimate, which is a quantitative verson of the
standard fact that two-dimensional Brownian motion is not
point-recurrent.

\begin{lemma} \label{lem:brownian-disk-avoidance}
  Let $W_t$ be a standard two-dimensional Brownian motion. For any
  $\epsilon \in (0, 1)$ and $\lambda > 0$, we have

  \[ \P \( \inf_{\epsilon \le t \le 1} |W_t|^2 < \lambda \) \le \frac{2}{\log \epsilon^{-1}} + \frac{3}{\epsilon} \exp \( - \frac{\log \lambda^{-1}}{\log \epsilon^{-1}} \). \]
\end{lemma}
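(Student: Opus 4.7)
The plan is to condition on $|W_\epsilon|$ via the strong Markov property and then apply the standard harmonicity estimate for two-dimensional Brownian motion, namely that $\log|\cdot|$ is harmonic on $\R^2 \setminus \{0\}$. This yields the classical gambler's ruin formula: 2D Brownian motion started at distance $\rho$ from the origin, with $r < \rho < R$, hits $\partial B_r$ before $\partial B_R$ with probability exactly $(\log R - \log \rho)/(\log R - \log r)$. I would also use that $|W_\epsilon|^2/\epsilon$ has a $\chi^2_2$ distribution, so $\P(|W_\epsilon| < r) = 1 - e^{-r^2/(2\epsilon)} \le r^2/(2\epsilon)$.

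First, I would split the event $\{\inf_{\epsilon \le t \le 1}|W_t|^2 < \lambda\}$ according to whether $|W_\epsilon|$ is below a threshold $r$ or not. Choosing $r = 2\sqrt{\epsilon/\log\epsilon^{-1}}$ makes the first case contribute at most $2/\log\epsilon^{-1}$, which matches the first term of the bound. For the second case, the strong Markov property at time $\epsilon$ reduces the problem to estimating, for a starting point $z$ with $|z| \ge r$, the probability that a fresh 2D Brownian motion from $z$ enters $B_{\sqrt\lambda}$ within time $1-\epsilon$. Introducing an auxiliary outer radius $R$, I would bound this conditional probability by
\[
\frac{\log R - \log r}{\log R - \log\sqrt\lambda} \;+\; \P\!\left(\sup_{t \le 1}|W_t| \ge R\right),
\]
where the first piece is the gambler's ruin estimate and the second is a Gaussian tail bound obtained by projecting onto coordinate axes and applying the 1D reflection principle, producing a bound of the form $C e^{-R^2/C}$.

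The final step is to choose $R$ so that this expression is at most the second term $(3/\epsilon)\exp(-\log\lambda^{-1}/\log\epsilon^{-1})$ of the lemma. The main obstacle I anticipate is the tension between the two pieces: a naive one-scale use of gambler's ruin produces a ratio that scales like $\log\epsilon^{-1}/\log\lambda^{-1}$, which is only polynomial decay in $\log\lambda^{-1}$, while the lemma calls for exponential decay in $\log\lambda^{-1}/\log\epsilon^{-1}$. I expect the sharpening to require either iterating the gambler's ruin over a geometric sequence of nested annuli $B_{\rho_k}$ with $\rho_k \propto e^{-k}$ (invoking strong Markov at each successive hitting time so that success probabilities multiply), or choosing $R$ large enough that the Gaussian tail $e^{-R^2/C}$ drives the bound, with the $1/\epsilon$ prefactor in the final estimate arising naturally from taking $R^2$ of order $\log\epsilon^{-1} + \log\lambda^{-1}/\log\epsilon^{-1}$. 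Once $R$ is tuned, matching the stated form is a direct calculation, since the ratio $r/\sqrt\lambda$ and the exit tail combine exactly as required to produce the $\exp(-\log\lambda^{-1}/\log\epsilon^{-1})$ factor.
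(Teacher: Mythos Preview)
Your ingredients are exactly those of the paper's proof: condition on $|W_\epsilon|$, use the harmonicity of $\log|\cdot|$ for a gambler's-ruin estimate between an inner radius $\sqrt\lambda$ and an outer radius, and bound the probability of reaching the outer radius within time $1$. The gap is in how you allocate the two error terms, and your choice of threshold $r = 2\sqrt{\epsilon/\log\epsilon^{-1}}$ is precisely what creates the obstacle you describe.

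In the paper the threshold on $|W_\epsilon|$ is $\sqrt{\lambda'}$ with $\lambda' = \lambda^{1/\log\epsilon^{-1}}$, which \emph{depends on $\lambda$}. With this choice the roles of the two terms are reversed from yours: the event $\{|W_\epsilon|^2 \le \lambda'\}$ has probability at most $\lambda'/(2\epsilon) = \tfrac{1}{2\epsilon}\exp\bigl(-\log\lambda^{-1}/\log\epsilon^{-1}\bigr)$, which furnishes the \emph{second} (exponential) term of the lemma; and on the complement, the gambler's-ruin ratio from $\sqrt{\lambda'}$ down to $\sqrt\lambda$ is
\[
\frac{2\log(\sqrt{\lambda'})^{-1}}{\log(\sqrt{\lambda})^{-1}} \;=\; \frac{2\log(\lambda')^{-1}}{\log\lambda^{-1}} \;=\; \frac{2}{\log\epsilon^{-1}},
\]
which furnishes the \emph{first} term. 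The outer radius is taken to be $1/\sqrt{\lambda'}$, and the probability of reaching it by time $1$ is bounded (via Doob's inequality applied to the submartingale $|W_t|^2$, not a Gaussian tail) by $2\lambda'$, again contributing to the exponential term. With this allocation the proof is a single-scale calculation; no iteration over nested annuli and no delicate tuning of $R$ is needed.

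Your threshold, being independent of $\lambda$, forces the gambler's-ruin term to carry all the $\lambda$-dependence, and as you correctly observe, a single annulus then yields only decay of order $\log\epsilon^{-1}/\log\lambda^{-1}$. Your proposed fix (b) of taking $R$ large does not help: enlarging $R$ only worsens the gambler's-ruin ratio $(\log R - \log r)/(\log R - \log\sqrt\lambda)$, which stays bounded below by roughly $\log r^{-1}/\log\lambda^{-1}$ regardless of $R$. Fix (a), iterating over geometric annuli, could in principle be made to work but is considerably more involved than simply moving the threshold to $\sqrt{\lambda'}$.
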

\begin{proof}
  See Appendix.
\end{proof}

Finally, the next lemma shows that certain conditioned random walks on
paths are equivalent to random walks on a path of different
conductances. Thus, the first Ray-Knight theorem may be applied in a
conditional setting as well. This will be important when we study
random walk transitions on general electrical networks.

\begin{lemma} \label{lem:conditioned-path}
  Let $N$ be a positive integer and $r > 0$ a real number.

  Consider an electrical network $G = (V, E)$ whose underlying graph
  is a path, with vertices labeled $0, 1, 2, \ldots , N + 1$. Suppose
  that the conductances are $c_{k, k + 1} = 1$ for $0 \le k < N$ and
  $c_{N, N + 1} = r$. Let $X = \{X_t\}_{t \ge 0}$ be a discrete time
  random walk on $G$ started at $N$, and let $\tau$ be the first time
  that $X$ hits $0$ or $N + 1$.

  On the other hand, let $G'$ be a path on vertices $0, 1, 2, \ldots ,
  N$ with conductances
  \[ c'_{k, k + 1} = \frac{\(N - k - 1 + \frac{1}{r}\)\(N - k + \frac{1}{r}\)}{\frac{1}{r}\(1 + \frac{1}{r}\)} \]
  for $0 \le k < N$. Let $Y = \{Y_t\}_{t \ge 0}$ be a discrete time
  random walk on $G'$ started at $k$. Then, the paths of $Y$ stopped
  upon hitting $0$ have the same distribution as the paths of $X$
  conditioned on $X_{\tau} = 0$.
\end{lemma}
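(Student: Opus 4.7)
The plan is to use Doob's $h$-transform. Set $h(k) = \P_k(X_\tau = 0)$, the probability that the walk on $G$ started at $k$ exits at $0$ rather than $N+1$. By standard harmonic analysis on $G$, $h$ is harmonic at each interior vertex $\{1, \ldots, N\}$, linear on $\{0, \ldots, N\}$ (since the conductances are uniformly $1$ on the first $N$ edges), and satisfies $h(0) = 1$, $h(N+1) = 0$. The only nontrivial equation is harmonicity at $N$, which together with linearity on $\{0, \ldots, N\}$ gives
\[ h(k) = \frac{N - k + 1/r}{N + 1/r} \quad \text{for } 0 \le k \le N, \qquad h(N+1) = 0. \]

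Next, by the Markov property and Bayes' rule, the law of $X$ conditioned on $\{X_\tau = 0\}$ is the Markov chain with transition kernel
\[ p'(k, j) = \frac{h(j)}{h(k)} \cdot \frac{c_{kj}}{c_k}. \]
The key structural observation is that any such $h$-transform is itself a reversible random walk on a weighted graph: setting $\tilde c_{kj} = h(k) h(j) c_{kj}$, the harmonicity $\sum_j h(j) c_{kj} = h(k) c_k$ at interior vertices yields $\tilde c_k = h(k)^2 c_k$, so $\tilde c_{kj} / \tilde c_k = p'(k, j)$. Since $h(N+1) = 0$, the edge $(N, N+1)$ has weight $0$ under $\tilde c$, so the conditioned walk is supported on $\{0, \ldots, N\}$, matching the state space of $Y$.

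Finally I would match conductances. Because $c_{k, k+1} = 1$ for $0 \le k < N$,
\[ \tilde c_{k, k+1} = h(k) h(k+1) = \frac{(N - k + 1/r)(N - k - 1 + 1/r)}{(N + 1/r)^2}, \]
which differs from the $c'_{k, k+1}$ of the statement by the single $k$-independent factor $(N + 1/r)^2 / \bigl[(1/r)(1 + 1/r)\bigr]$. A global rescaling of all conductances leaves the random walk unchanged, so $Y$ and the $h$-transformed $X$ have identical transition laws on $\{1, \ldots, N\}$; at the endpoint $N$ both walks step deterministically to $N-1$ (for $Y$ because $N$ has a unique neighbor in $G'$; for the conditioned $X$ because the formula above yields $p'(N, N-1) = 1$), and both are stopped at $0$. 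Hence the path distributions agree.

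The proof is essentially mechanical; the only real obstacle is conceptual — recognizing that the Doob $h$-transform of a reversible walk is again reversible, with conductances multiplied by $h(k) h(j)$. Once this is in hand, the conductance formula in the lemma is forced and the verification reduces to inspection.
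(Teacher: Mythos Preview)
Your proof is correct and follows essentially the same approach as the paper: compute the hitting probability $h(k)=\P_k(X_\tau=0)=\frac{N-k+1/r}{N+1/r}$, then verify that the conditioned transition probabilities match those of the walk on $G'$. The only difference is packaging---you invoke the Doob $h$-transform and the general fact that it preserves reversibility with new conductances $h(i)h(j)c_{ij}$, whereas the paper simply computes the ratio $\P(X_{t+1}=k+1\mid X_t=k,X_\tau=0)/\P(X_{t+1}=k-1\mid X_t=k,X_\tau=0)$ directly and matches it to $c'_{k,k+1}/c'_{k-1,k}$.
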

\begin{proof}
  This can be easily checked by calculating hitting probabilities,
  which can then be used to calculate transition probabilities for
  $X_t$ conditioned on $X_\tau = 0$. See Appendix.
\end{proof}

\begin{corollary} \label{cor:conditioned-path}
  Let $N$, $r$, and $G$ be as in Lemma \ref{lem:conditioned-path}, and
  suppose further that $r < 1$. Let $X$ be a continuous time random
  walk on $G$, and let $\tau = \inf \{ t \ge 0 : X_t = 0 \text{ or } N
  + 1 \}$. Then, for any $\epsilon \in (0, 1)$ and $\beta > 0$,
  \[ \P \( \min_{\epsilon N \le k < N} \L^X_\tau(k) \le \beta N \mmid X_\tau = 0 \) \le \frac{2}{\log \epsilon^{-1} - C_\alpha } + \frac{C_\alpha}{\epsilon} \exp \( - \frac{\log \beta^{-1} - C_\alpha}{\log \epsilon^{-1} + C_\alpha} \) \]
  where $\alpha = rN$, and $C_\alpha > 0$ is a number depending only on $\alpha$.
\end{corollary}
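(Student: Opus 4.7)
The strategy is to reduce the conditioned walk on $G$ to an unconditioned walk on $G'$ via Lemma \ref{lem:conditioned-path}, then apply Corollary \ref{cor:first-ray-knight} to replace the local times on $G'$ by squared values of a two-dimensional Brownian motion, and finally invoke Lemma \ref{lem:brownian-disk-avoidance} after a Brownian rescaling. Throughout, set $s = 1/r > 1$, so $\alpha = rN = N/s$.

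\textbf{Reduction to $G'$.} Lemma \ref{lem:conditioned-path} gives a coupling identifying the skeleton (sequence of vertices visited) of the conditioned walk $X$ on $G$ with that of an unconditioned walk $Y$ on $G'$ run until it hits $0$. Attaching independent $\mathrm{Exp}(1)$ waiting times to each step extends this to a coupling of the continuous-time walks under which the numbers of visits to each vertex coincide. A short calculation gives $c^G_k = 2$ and $c^{G'}_k = \frac{2(N - k + s)^2}{s(s+1)}$ for $1 \le k < N$, hence under the coupling
\[ \L^X_\tau(k) \;=\; \frac{c^{G'}_k}{c^G_k}\,\L^Y_T(k) \;=\; \frac{(N - k + s)^2}{s(s+1)}\,\L^Y_T(k) \;\ge\; \L^Y_T(k), \]
the last inequality because the ratio is minimized at $k = N - 1$, where it equals $(1 + s)/s > 1$. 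So it suffices to bound $\P\bigl(\min_{\epsilon N \le k < N} \L^Y_T(k) \le \beta N\bigr)$.

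\textbf{Ray--Knight on $G'$.} Applying Corollary \ref{cor:first-ray-knight} to $Y$ on $G'$, with $a_k = \sum_{i=0}^{k-1} (c'_{i,i+1})^{-1}$, gives $\{\L^Y_T(k)\}_{1 \le k < N} \laweq \{|W_{a_k}|^2\}_{1 \le k < N}$. Using the partial fraction identity
\[ (c'_{i,i+1})^{-1} \;=\; s(s+1)\left[\frac{1}{N - i - 1 + s} - \frac{1}{N - i + s}\right], \]
the sum telescopes to the closed form $a_k = \frac{s(s+1)\,k}{(N - k + s)(N + s)}$; in particular $a_{N-1} = \frac{s(N-1)}{N+s}$. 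Direct computation then shows that both $a_{N-1}/a_{\lceil \epsilon N \rceil}$ and $N/a_{N-1}$ are bounded above and below by explicit functions of $\alpha$ and $\epsilon$; in particular, $\log\bigl(a_{N-1}/a_{\lceil \epsilon N \rceil}\bigr) = \log \epsilon^{-1} + O_\alpha(1)$ and $\log\bigl(N/a_{N-1}\bigr) = O_\alpha(1)$.

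\textbf{Brownian scaling and conclusion.} Bounding the discrete minimum by the continuous infimum and applying the scaling $|W_{a t}|^2 \laweq a\,|W_t|^2$,
\[ \min_{\epsilon N \le k < N} |W_{a_k}|^2 \;\ge\; \inf_{a_{\lceil \epsilon N \rceil} \le u \le a_{N-1}} |W_u|^2 \;\laweq\; a_{N-1} \inf_{\epsilon' \le t \le 1} |W_t|^2, \]
where $\epsilon' = a_{\lceil \epsilon N \rceil}/a_{N-1}$. Applying Lemma \ref{lem:brownian-disk-avoidance} with this $\epsilon'$ and $\lambda' = \beta N/a_{N-1}$ and absorbing the $\alpha$-dependent discrepancies in $\log(\epsilon')^{-1}$, $1/\epsilon'$, and $\log(\lambda')^{-1}$ (each an $O_\alpha(1)$ perturbation of $\log \epsilon^{-1}$, $1/\epsilon$, and $\log \beta^{-1}$ respectively) into a single constant $C_\alpha$ gives the stated bound. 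The main obstacle is purely bookkeeping: verifying that all the correction terms from telescoping, the scaling factor $a_{N-1}$, and the conductance ratio collapse into a single constant $C_\alpha$ uniformly in $N > \alpha$.
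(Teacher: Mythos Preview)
Your proposal is correct and follows essentially the same approach as the paper: reduce to $G'$ via Lemma~\ref{lem:conditioned-path}, apply Corollary~\ref{cor:first-ray-knight} to get $|W_{a_k}|^2$, telescope to get the closed form for $a_k$, rescale Brownian motion, and invoke Lemma~\ref{lem:brownian-disk-avoidance}. The one place you are actually more careful than the paper is the conductance-ratio step: the paper writes the reduction as an equality and records $c'_{k-1,k}+c'_{k,k+1}\ge 2$ without explicitly saying why, whereas you correctly identify that $\L^X_\tau(k)=\frac{c^{G'}_k}{c^G_k}\L^Y_T(k)\ge \L^Y_T(k)$ is the inequality that makes the reduction go through.
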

\begin{remark}
The statement of Corollary \ref{cor:conditioned-path} takes this
somewhat awkward form because it will be used for $r$ on the order of
$\frac{1}{N}$.
\end{remark}
\begin{proof}
  By Lemma \ref{lem:conditioned-path} (using the same notation), the
  paths of $X$ are distributed as a random walk on a path of $N$
  edges with conductances
  \[ c'_{k, k + 1} = \frac{\(N - k - 1 + \frac{1}{r}\)\(N - k + \frac{1}{r}\)}{\frac{1}{r}\(1 + \frac{1}{r}\)} \]
  for $0 \le k < N$. Thus, by Corollary \ref{cor:first-ray-knight},
  \[ \P \( \min_{\epsilon N \le k < N} \L^X_\tau(k) \le \beta N \mmid X_\tau = 0 \) = \P \( \min_{\epsilon N \le k < N} |W_{a_k}|^2 \le \beta N \), \]
  where $W_t$ is a two-dimensional Brownian motion, and
  \[ a_k = \sum_{i = 0}^{k - 1} \frac{1}{c'_{i, i + 1}} = \sum_{i = 0}^{k - 1} \frac{1}{r} \( 1 + \frac{1}{r} \) \( \frac{1}{N - i - 1 + \frac{1}{r}} - \frac{1}{N - i + \frac{1}{r}} \) \]
  \[ = \frac{1}{r} \( 1 + \frac{1}{r} \) \( \frac{1}{N - k + \frac{1}{r}} - \frac{1}{N + \frac{1}{r}} \). \]
  From the above equations, the following bounds are easy to verify
  for $\epsilon N \le k < N$.
  \[ c'_{k - 1, k + 1} + c'_{k, k + 1} \ge 2 \]
  \[ a_k \ge \frac{1}{r} \( 1 + \frac{1}{r} \) \( \frac{1}{N - \epsilon N + \frac{1}{r}} - \frac{1}{N + \frac{1}{r}} \) > \frac{\epsilon N}{(1 + rN)^2}. \]
  \[ a_k \le a_N \le \frac{2}{r}. \]
  It follows that
  \[ \P \( \min_{\epsilon N \le k < N} \L^X_\tau(k) \le \beta N \mmid X_\tau = 0 \) \le \P \( \inf_{\frac{\epsilon N}{(1 + rN)^2} \le t \le \frac{2}{r}} |W_t|^2 \le \beta N \) \]
  \[ = \P \( \inf_{\frac{\epsilon rN}{2 (1 + rN)^2} \le t \le 1} |W_t|^2 \le \frac{\beta r N}{2} \) = \P \( \inf_{\frac{\epsilon \alpha}{2 (1 + \alpha)^2} \le t \le 1} |W_t|^2 \le \frac{\beta \alpha}{2} \) \]
  \[ \le \frac{2}{\log \epsilon^{-1} - C_\alpha } + \frac{C_\alpha}{\epsilon} \exp \( - \frac{\log \beta^{-1} - C_\alpha}{\log \epsilon^{-1} + C_\alpha} \), \]
  for $C_\alpha$ sufficiently large. In the second line, we have used
  the scale-invariance of Brownian motion, and the third line is an
  application of Lemma \ref{lem:brownian-disk-avoidance}.
\end{proof}

\section{Stochastic domination in the generalized second Ray-Knight theorem} \label{sec:domination}

The goal of this section is to prove the following stochastic
domination theorem, which is a variant of Theorem 3 in \cite{L14}.

\begin{theorem}[variant of \cite{L14}, Theorem 3] \label{thm:domination}
  Let $\tau^+(t)$ and $\eta$ be as in Theorem
  \ref{thm:second-ray-knight}. Then, we have
  \[ \left\{ \sqrt{\L_{\tau^+(t)}(x)} : x \in V \right\} \preceq \frac{1}{\sqrt{2}} \left\{ \max \( \eta_x + \sqrt{2t}, 0 \) : x \in V \right\}, \]
  where $\preceq$ denotes stochastic domination.
\end{theorem}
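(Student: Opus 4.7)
The plan is to lift the whole setup to the \emph{metric graph} $\tilde{G}$, in which each edge $(x,y)$ of $G$ is replaced by a copy of the interval $[0, 1/c_{xy}]$ glued at its endpoints to $x$ and $y$. On $\tilde{G}$, the continuous-time random walk lifts to a Brownian motion $\tilde{X}$ (with Walsh-type behavior at each vertex), whose local time $\tilde{\L}(\cdot)$ is a continuous function on $\tilde{G}$; the Gaussian free field lifts to a continuous random field $\tilde{\phi}$ which, given its values on $V$, is an independent Brownian bridge on each edge, and whose restriction to $V$ has the law of $\eta$. The first step is to establish the metric-graph analogue of Theorem \ref{thm:second-ray-knight},
\[ \left\{ \tilde{\L}_{\tau^+(t)}(x) + \tfrac{1}{2}\tilde{\phi}(x)^2 \right\}_{x \in \tilde{G}} \laweq \left\{ \tfrac{1}{2}\tilde{\psi}(x)^2 \right\}_{x \in \tilde{G}}, \]
where $\tilde{\psi}$ is a metric-graph GFF with boundary value $\tilde{\psi}(v_0) = \sqrt{2t}$. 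Reduced to a single edge, this identity follows from Theorem \ref{thm:second-ray-knight} together with the first Ray-Knight theorem (Corollary \ref{cor:first-ray-knight}).

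The second step is to realize this identity as an \emph{explicit coupling}. I would sample $\tilde{X}$ (run until $\tau^+(t)$) and $\tilde{\phi}$ independently, define $|\tilde{\psi}|(x) := \sqrt{2\tilde{\L}(x) + \tilde{\phi}(x)^2}$, and then assign signs to $\tilde{\psi}$ as follows: on the connected component of the open set $\{|\tilde{\psi}| > 0\}$ containing $v_0$, take the sign to be $+$ (so that $\tilde{\psi}(v_0) = +\sqrt{2t}$); on each of the other connected components, assign a uniformly random $\pm$ sign, independently across components. The key distributional lemma, due in essence to Lupu \cite{L14}, is that the $\tilde{\psi}$ so produced has precisely the law of the metric-graph GFF with boundary condition $\sqrt{2t}$ at $v_0$.

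The decisive observation is then topological. The set $\{x \in \tilde{G} : \tilde{\L}(x) > 0\}$ is exactly the trace of $\tilde{X}$ over $[0, \tau^+(t)]$, a continuous image of an interval, and is therefore a connected subset of $\tilde{G}$ containing $v_0$. On this set, $|\tilde{\psi}|^2 = 2\tilde{\L} + \tilde{\phi}^2 \ge 2\tilde{\L} > 0$, so the whole trace lies in a single sign cluster of $\tilde{\psi}$, necessarily the one containing $v_0$, on which $\tilde{\psi} > 0$ by construction. Hence $\tilde{\psi}(x) \le 0$ forces $\tilde{\L}(x) = 0$. Setting $\tilde{\eta} := \tilde{\psi} - \sqrt{2t}$, we obtain a metric-graph GFF with $\tilde{\eta}(v_0) = 0$ whose restriction to $V$ is $\eta$; pointwise on $\tilde{G}$, when $\tilde{\psi}(x) \ge 0$ the coupling gives $\tilde{\L}(x) \le \tfrac{1}{2}\tilde{\psi}(x)^2 = \tfrac{1}{2}\max(\tilde{\eta}(x) + \sqrt{2t}, 0)^2$, and when $\tilde{\psi}(x) < 0$ both sides vanish. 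Restricting to $x \in V$ and taking square roots yields the desired stochastic domination. I expect the main obstacle to be the second step: justifying rigorously that the sign-assignment recipe above produces the correct joint law, which is the technical heart of Lupu's approach and requires care with the metric-graph GFF and its sign clusters.
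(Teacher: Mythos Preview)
Your approach is correct and is essentially Lupu's direct metric-graph argument, which the paper describes heuristically at the start of Section~\ref{sec:domination} but then deliberately sidesteps: the paper subdivides each edge into $N$ pieces to form $G_N$, proves quantitative local-time estimates (Lemmas~\ref{lem:near-local-time} and~\ref{lem:bridge-local-time} and their corollaries) that stand in for ``no isolated zeros of $\tilde{\L}$'' and ``continuity of $\tilde{\psi}$'', couples via the discrete second Ray-Knight theorem on $G_N$ (Lemma~\ref{lem:domination}), and sends $N\to\infty$. Your route is more conceptual and shorter once the metric-graph machinery (Brownian motion on $\tilde{G}$, its continuous local time, the continuous GFF, and above all Lupu's sign-cluster lemma) is taken as given; the paper's route trades that machinery for elementary random-walk estimates and is self-contained for readers who have not absorbed \cite{L14}. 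You are right that the sign-assignment lemma is the technical heart; the paper's discretization is precisely a way to avoid proving it.

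One point in your topological step needs correction. You assert that $\{x:\tilde{\L}(x)>0\}$ \emph{equals} the trace of $\tilde{X}$ on $[0,\tau^+(t)]$; this is false, since the trace is closed while $\{\tilde{\L}>0\}$ is open (local time vanishes at the extremal tips of excursions into each edge). Being a subset of a connected set does not make $\{\tilde{\L}>0\}$ connected. What you actually need, and what is true, is that $\{\tilde{\L}>0\}$ is connected and contains $v_0$ --- equivalently, that $\tilde{\L}$ has no isolated zeros in the interior of the trace. This is exactly the issue the paper flags in its heuristic discussion and resolves via the first Ray-Knight theorem: the local-time profile along an edge segment is distributed as the squared modulus of a planar Brownian motion, which a.s.\ does not vanish in the interior because planar Brownian motion is not point-recurrent. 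Invoke that fact rather than the incorrect identification with the trace; once you do, the rest of your argument goes through.
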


Theorem \ref{thm:domination} extends Theorem 2.3 from \cite{D14},
which proves the result for trees. The approach in \cite{D14} uses a
Markovian property of local times for trees which does not seem to
extend to general electrical networks. We take a different approach of
embedding the finite-dimensional Gaussian free field inside a larger
infinite-dimensional Gaussian free field, which has desirable
continuity properties that were not apparent in the finite-dimensional
setting. As mentioned in the introduction, we discovered while writing
up our results that this idea appeared earlier in \cite{L14}.

Let us first give a heuristic description of the approach. Recall that
the continuous time random walk on an electrical network makes jumps
at exponentially distributed random intervals. An equivalent way of
sampling the continuous time random walk is to perform a Brownian
motion along the edges of the network. By this we mean that our
discrete state space $V$ is replaced by a larger state space
$\widehat{V}$ which includes not only the vertices in $V$ but also
each point along each edge of $E$ (regarding the edges as line
segments, so that $\widehat{V}$ is topologically a simplicial
$1$-complex). The object $\widehat{V}$ is known as a \emph{metric
  graph} and arises in physics and chemistry (see e.g. \S 5 of
\cite{CDT05}).

A Brownian motion on $\widehat{V}$ is, informally, a continuous Markov
process $\widehat{X} = \{ \widehat{X}(t) \}_{t \ge 0}$ taking values
in $\widehat{V}$ that behaves like a one-dimensional Brownian motion
on edges. The earliest rigorous development of this idea we could find
was carried out by Baxter and Chacon \cite{BC84}. See also
\cite{KPS12} for a more recent treatment.

It turns out that the Gaussian free field $\widehat{\eta}$ on
$\widehat{V}$ (without defining this precisely) is almost surely
continuous in the topology of $\widehat{V}$.\footnote{The Gaussian
  free field on $\widehat{V}$ can be constructed by sampling the GFF
  on $V$ and then sampling Brownian bridges on each edge.} We can also
define a notion of local time $\L^{\widehat{X}}_t(v)$, and we can
define the stopping time $\tau^+(t)$ analogously to the discrete
case. For convenience, let us write $\widehat{\L}_t$ for
$\L^{\widehat{X}}_{\tau^+(t)}$. With an appropriate normalization, the
restrictions of $\widehat{\eta}$ and $\widehat{\L}_t$ to $V \subset
\widehat{V}$ have the same laws as the corresponding objects on the
original network $G = (V, E)$. The generalized second Ray-Knight
theorem translates to
\begin{equation} \label{eq:continuous-isomorphism}
  \left\{ \widehat{\L}_{\tau^+(t)}(v) + \frac{1}{2}
  \widehat{\eta}_v^2 : v \in \widehat{V} \right\} \laweq \left\{
  \frac{1}{2} \( \widehat{\eta}'_v + \sqrt{2t} \)^2 : v \in
  \widehat{V} \right\},
\end{equation}
where $\widehat{\eta}'$ is another copy of $\widehat{\eta}$, and $c_v$
is a continuous analogue of the total conductance at a vertex.

Now, suppose that $\widehat{\eta}$ and $\widehat{\eta}'$ are coupled
in a way so that the two sides in equation
\ref{eq:continuous-isomorphism} are actually equal. Consider the
function $f: \widehat{V} \to \R$ given by $f(x) = (\widehat{\eta}'_x +
\sqrt{2t}) - \widehat{\eta}_x$. We have that $f(v_0) =
\sqrt{2t} > 0$, $f$ is continuous, and if $f(x) = 0$,
then $\widehat{\L}_t(x) = 0$. It turns out that the set $U = \{ v \in
\widehat{V} : \widehat{\L}_t(v) > 0 \}$ is connected, and clearly it
includes $v_0$. It follows that $f(x) > 0$ for all $x \in U$, which is
exactly the desired stochastic domination once we restrict to $V
\subset \widehat{V}$.

The assertion that $U$ is connected deserves some elaboration. It is
intuitively clear that the closure of $U$ should be connected, since
any point $v \in \widehat{V}$ which accumulates positive local time
must have been visited along some connected path from $v_0$ to
$v$. Thus, every non-trivial segment along this path should have also
accumulated positive local time.

On the other hand, it is not immediately obvious why $U$ itself is
connected, since there might be local times of $0$ at isolated
points. However, we can see heuristically that this pathology doesn't
occur by the first Ray-Knight theorem. Recall from Section
\ref{subsec:path-walks} that the first Ray-Knight theorem equates the
local times of a certain stopped Brownian motion to the distance of a
planar Brownian motion from the origin. Because planar Brownian motion
is not point-recurrent, the local times are \emph{all} positive almost
surely, and in particular, the set of points with $0$ local time does
not have isolated points.

To avoid technicalities, we will not actually use Brownian motion in
our proof. Instead, we will use a discrete approximation of Brownian
motion and pass to the limit. Arguments involving the continuity of
Gaussian free fields and positivity of local times will be translated
into corresponding quantitative estimates.

\subsection{A discrete refinement of $G$} \label{subsec:discrete-refinement}

Recall our setting of an electrical network $G = (V, E)$ with
conductances $\{ c_{xy} : x, y \in V \}$. For each positive integer $N
> 1$, we define a refinement $G_N = (V_N, E_N)$ by replacing each edge
$(x, y) \in E$ with a length $N$ path whose vertices we denote by

\[ \{ x = v_{xy, 0}, v_{xy, 1}, \ldots , v_{xy, N} = y \}. \]

We thus have edges between $v_{xy, i}$ and $v_{xy, i + 1}$ for each $0
\le i < N$. We will use $v_{yx, i}$ to denote the same vertex as
$v_{xy, N - i}$, and we will regard $V$ as a subset of $V_N$, so that
a vertex $x \in V$ will sometimes be considered as a vertex in $V_N$.

We choose the conductances of $G_N$ so that the effective resistance
between $x, y \in V$ as vertices in $G$ will be the same when they are
considered as vertices in $G_N$. In particular, we set the conductance
between $v_{xy, i}$ and $v_{xy, i + 1}$ to be $N c_{xy}$. Since the
effective resistances are equivalent, $G$ is in some sense a
projection of $G_N$. The following proposition makes this explicit.

\begin{proposition} \label{prop:projection}
Let $\eta$ be the GFF on $G$, and let $X$ be a continuous time random
walk on $G$. Let $\eta_N$ and $X_N$ denote the corresponding objects
for $G_N$. Then, for any $t > 0$ we have the following two identities
in law.

\[ \{ \eta_{N, v} : v \in V \} \laweq \{ \eta_v : v \in V \} \]
\[ \left\{ \L^{X_N}_{\tau^+(t)}(x) : x \in V \right\} \laweq \Big\{ \L^X_{\tau^+(t)}(x) : x \in V \Big\}. \]
\end{proposition}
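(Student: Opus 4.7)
The first identity follows from the fact that the construction of $G_N$ preserves effective resistances between vertices of $V$. Each edge $xy \in E$ of conductance $c_{xy}$ is replaced by $N$ series edges of conductance $Nc_{xy}$, whose total resistance is $\frac{1}{c_{xy}}$; by the variational characterization of $\Reff$ (or via series/parallel reductions), this implies $\Reff^{G_N}(u, v) = \Reff^{G}(u, v)$ for all $u, v \in V$. Since the GFF on each network is a centered Gaussian vector pinned to zero at $v_0$ whose covariance structure is determined by effective resistances via $\E(\eta_x - \eta_y)^2 = \Reff(x, y)$, the restriction of $\eta_N$ to $V$ has the same law as $\eta$.

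For the second identity, the plan is to show that the sequence of $V$-vertices visited by $X_N$, together with the normalized local time accumulated between successive transitions in $V$, has the same joint law as the analogous data for $X$ on $G$. Starting from a vertex $x \in V$, the walk $X_N$ jumps to the first interior vertex $v_{xy, 1}$ of the path $xy$ with probability $\frac{Nc_{xy}}{Nc_x} = \frac{c_{xy}}{c_x}$. The ensuing excursion on the symmetric $N$-edge path reaches $y$ before returning to $x$ with gambler's-ruin probability $\frac{1}{N}$. Thus, per cycle, the walk escapes $x$ to a new $V$-vertex with probability $\frac{1}{N}$, and conditional on escape the new vertex is $y$ with probability $\frac{c_{xy}}{c_x}$ — matching the transition probabilities of $X$. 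The number of visits to $x$ between successive $V$-transitions is therefore $\text{Geom}(1/N)$; each visit contributes an $\text{Exp}(1)$ amount of unnormalized time; the total, a geometric sum of exponentials, is $\text{Exp}(1/N)$. Dividing by $c_{x, G_N} = Nc_x$ yields an $\text{Exp}(c_x)$ increment to the normalized local time, which matches the per-visit local-time increment $\frac{1}{c_x}\text{Exp}(1)$ for $X$ on $G$.

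Invoking the strong Markov property at successive $V$-arrival times, the transition probabilities and local-time increments are independent across macroscopic visits, so the embedded process of $V$-vertices visited by $X_N$ together with its normalized local-time increments agrees in law with $(X, \L^X)$. Since $\tau^+(t)$ depends only on the local time at $v_0$, it is measurable with respect to this embedded process, and thus the stopped local times $\{\L^{X_N}_{\tau^+(t)}(x)\}_{x \in V}$ and $\{\L^{X}_{\tau^+(t)}(x)\}_{x \in V}$ have the same joint distribution. The main subtlety is verifying the independence structure: that the number of path-excursions, the path each excursion enters, and the $\text{Exp}(1)$ waiting times at $x$ are mutually independent, so that the geometric-sum-of-exponentials identity and the $\frac{c_{xy}}{c_x}$ transitions decouple cleanly. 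This follows from the definition of the continuous-time walk (waiting times and jump destinations are independent) once the strong Markov property is correctly applied.
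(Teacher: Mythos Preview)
Your proof is correct and follows essentially the same route as the paper. For the GFF identity you invoke preservation of effective resistances exactly as the paper does; for the local times you construct the embedded $V$-valued walk inside $X_N$, compute the transition probabilities via gambler's ruin on the subdivided edge, identify the holding time at $x$ as a geometric sum of unit exponentials (hence exponential with mean $N$), and then match normalizations---this is precisely the ``direct'' argument the paper gives following the statement of the proposition.
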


The identity between $\eta_N$ and $\eta$ is immediate from the
equivalence of effective resistances. The identity between local times
then follows from Theorem \ref{thm:second-ray-knight}. However, there
is also a very direct way to see the equivalence of local times which
we now describe.

If $X_N(t)$ is a continuous time random walk on $G_N$ started at
$v_0$, then $X_N(t)$ induces a random walk $X_N^G(t)$ on $G$ by only
recording the time spent in $V$. More formally, define $t_0 = 0$, and
for each $i \ge 0$, define
\[ t_{i + 1} = \inf \{ t > t_i : X_N(t) \in V \text{ and } X_N(t)
\ne X_N(t_i)\}.\footnote{We are taking our process $X_N$ to be right
  continuous, so the infimum is achieved, and in particular $X_N(t_{i
    + 1}) \in V$.} \]
Define also
\[ s_i = \int_{t_i}^{t_{i + 1}} \1_{\{ X_N(s) = X_N(t_i) \}} ds \]
to be the amount of time spent in $X_N(t_i)$ during the time interval
$[t_i, t_{i + 1}]$.

Then, consider the $V$-valued process $X_N^G(t)$ which starts at $v_0$
and, for each $i$, jumps to $X_N(t_{i + 1})$ at time $\sum_{j = 1}^i
s_j$. Note that if $X_N(t_i) = x \in V$, at the next jump $X_N$
transitions to $v_{xy, 1}$ with probability $\frac{c_{xy}}{c_x}$ for
each $y$ neighboring $x$ in $G$. After that, $X_N$ behaves like a
simple random walk on $\Z$ started at $1$ and stopped upon hitting
either $0$ (corresponding to $v_{xy, 0} = x$) or $N$ (corresponding to
$v_{xy, N} = y$). Thus, with probability $\frac{N - 1}{N}$ it returns
to $x$, and with probability $\frac{1}{N}$ it hits $y$.

Consequently, between times $t_i$ and $t_{i + 1}$, the number of times
$X_N$ visits $x$ is geometrically distributed with mean $N$, and so
the accumulated local time $s_i$ is exponentially distributed with
mean $N$. Moreover, we see that
\[ \P \( X_N(t_{i + 1}) = y \mmid X_N(t_i) = x \) = \frac{c_{xy}}{c_x}, \]
so $X_N^G(t)$ has the same law as a continuous time random walk on $G$
except that the waiting times between jumps are scaled by $N$. In
particular, we have
\[ \left\{ \L^{X_N}_{\tau^+(t)}(x) : x \in V \right\} = \left\{ \frac{1}{N} \cdot \L^{X^G_N}_{\tau^+(Nt)}(x) : x \in V \right\} \laweq \Big\{ \L^X_{\tau^+(t)}(x) : x \in V \Big\}, \]
where $X$ is a continuous time random walk on $G$. Note that the
factor of $N$ appearing in the middle expression comes from the
normalization by total conductance at $x$, which differs for $G$ and
$G_N$.

\subsection{Local times of $G_N$} \label{subsec:discrete-local-times}

We will need two estimates concerning local times on $G_N$, stated as
Lemmas \ref{lem:near-local-time} and \ref{lem:bridge-local-time}
below. These correspond to our assertion that the set $U$ is connected
in the heuristic proof outline provided at the beginning of the
section.

In the lemmas that follow, we consider a continuous time random walk
$X_N(t)$ on $G_N$ started at a vertex $x \in V$. Let $\tau_x$ denote
the first time the walk hits another vertex $y \in V$ distinct from
$x$. The first estimate states, roughly, that it is very likely for
vertices near $x$ to accumulate significant local time.

We will need a standard concentration estimate for sums of
i.i.d. exponential random variables. Unfortunately, we were unable to
find a reference that contained both tail bounds, so a short proof is
included in the appendix.

\begin{lemma} \label{lem:exp-conc}
  Let $X_1, X_2, \ldots , X_N$ be i.i.d. exponential random variables
  with mean $\mu$. Then, for any $\alpha \in [0, 1]$, we have
  \[ \P\( \left| \sum_{i = 1}^N X_i - \mu N \right| \ge \alpha \mu N \) \le 2 e^{-\frac{1}{4} \alpha^2 N}. \]
\end{lemma}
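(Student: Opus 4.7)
The plan is a standard Cramér--Chernoff argument applied separately to the upper and lower tails, followed by a union bound to get the factor of $2$. By rescaling, I may assume $\mu = 1$, so the goal becomes $\P(|S_N - N| \ge \alpha N) \le 2 e^{-\alpha^2 N / 4}$ where $S_N = \sum_{i=1}^N X_i$ and $X_i \sim \text{Exp}(1)$.

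For the upper tail, I would use the moment generating function $\E e^{s X_i} = (1-s)^{-1}$ valid for $s < 1$, so that $\E e^{s S_N} = (1-s)^{-N}$. Markov's inequality gives
\[ \P\bigl(S_N \ge (1+\alpha) N\bigr) \le \exp\Bigl( -N \bigl( s(1+\alpha) + \log(1-s) \bigr) \Bigr). \]
Optimizing in $s$ (the optimum is $s = \alpha/(1+\alpha) \in [0,1)$) yields the bound $\exp(-N(\alpha - \log(1+\alpha)))$. Analogously, for the lower tail I take $s > 0$ and use $\E e^{-s X_i} = (1+s)^{-1}$ to get, after optimization,
\[ \P\bigl(S_N \le (1-\alpha) N\bigr) \le \exp\Bigl(-N\bigl(-\alpha - \log(1-\alpha)\bigr)\Bigr). \]

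What remains is the elementary comparison of the rate functions with $\alpha^2/4$ on $[0,1]$. For the lower tail this is immediate from the Taylor series $-\alpha - \log(1-\alpha) = \sum_{k \ge 2} \alpha^k/k \ge \alpha^2/2 \ge \alpha^2/4$. For the upper tail, the cleanest route is to show that $g(\alpha) := \alpha - \log(1+\alpha) - \alpha^2/4$ is nonnegative on $[0,1]$ by computing
\[ g'(\alpha) = \frac{\alpha}{1+\alpha} - \frac{\alpha}{2} = \frac{\alpha(1-\alpha)}{2(1+\alpha)} \ge 0, \]
and noting $g(0) = 0$. Combining both tails via a union bound gives the stated inequality.

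There is no real obstacle here; the only mild subtlety is that the upper-tail rate function $\alpha - \log(1+\alpha)$ does not dominate $\alpha^2/2$ (one easily sees this fails near $\alpha = 1$), which is precisely why the constant $1/4$ (rather than $1/2$) appears in the exponent. The $\alpha \in [0,1]$ restriction is exactly what makes the weaker $\alpha^2/4$ bound hold uniformly.
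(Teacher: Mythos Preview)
Your proof is correct and follows essentially the same Cram\'er--Chernoff route as the paper. The only cosmetic difference is that the paper bounds the cumulant generating function $-\log(1-\mu t)$ by the quadratic $\mu t + \mu^2 t^2$ for $|t|\le \alpha/(2\mu)$ and then plugs in $t=\pm\alpha/(2\mu)$, whereas you optimize over $t$ first to obtain the exact rate functions $\alpha-\log(1+\alpha)$ and $-\alpha-\log(1-\alpha)$ and then compare each to $\alpha^2/4$; both executions yield the same bound.
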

\begin{proof}
  See Appendix.
\end{proof}

\begin{lemma} \label{lem:near-local-time}
  Let $y \in V$ be any neighbor of $x$ in $G$, let $\epsilon \in \(0,
  \frac{1}{2}\), \lambda > 0$ be given, and define $k = \lfloor
  \epsilon N \rfloor$. Then,
  \[ \P\( \min_{0 \le i \le k} \L_{\tau_x}(v_{xy, i}) < \lambda \) \le C_G \cdot \epsilon N \( \lambda + \exp \( - \frac{\lambda N}{8 C_G} \) \) \]
  for some constant $C_G$ depending on $G$ but not $N$.
\end{lemma}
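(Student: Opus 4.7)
The plan is to establish the per-vertex bound
\[
\P\(\L_{\tau_x}(v_{xy,i}) < \lambda\) \le C_G\(\lambda + \exp(-\lambda N/(8C_G))\)
\]
for each $i \in \{0, 1, \ldots, k\}$, and then take a union bound over the $k + 1 \le \epsilon N + 1$ values of $i$ to conclude.

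For the per-vertex bound, I would first use the excursion structure of $X_N$ started at $x$: each excursion from $x$ independently chooses a direction $y' \in N(x)$ with probability $c_{xy'}/c_x$ and, independently of direction, either returns to $x$ (with probability $1 - 1/N$) or terminates at $y'$ (with probability $1/N$), stopping the walk at $\tau_x$. Writing
\[
\L_{\tau_x}(v_{xy,i}) = \frac{1}{\(c^{(N)}_{v_{xy,i}}\)^2}\sum_{j=1}^{n_i} E_j,
\]
where $n_i$ is the number of visits to $v_{xy,i}$, $c^{(N)}_{v_{xy,i}} = 2Nc_{xy}$ for $1 \le i < N$ (with an analogous expression at $i = 0$), and the $E_j \sim \mathrm{Exp}(1)$ are iid independent of $n_i$, I would set $n^* = 8 \lambda N^2 c_{xy}^2$ (chosen so that the expected value of the exponential sum on $\{n_i \ge n^*\}$ is at least twice $(c^{(N)}_{v_{xy,i}})^2 \lambda$) and split
\[
\P\(\L_{\tau_x}(v_{xy,i}) < \lambda\) \le \P(n_i < n^*) + \P\(\L_{\tau_x}(v_{xy,i}) < \lambda, \, n_i \ge n^*\).
\]
The second term is at most $2 e^{-n^*/16}$ by Lemma \ref{lem:exp-conc} with $\alpha = 1/2$; after absorbing $G$-dependent constants into $C_G$, this matches the $C_G \exp(-\lambda N/(8C_G))$ bound.

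The main obstacle is showing $\P(n_i < n^*) \le C_G \lambda$. For $1 \le i \le k \le \epsilon N$, each excursion toward $y$ is a simple random walk on the refined length-$N$ $x$-to-$y$ path starting at $v_{xy,1}$, and by Corollary \ref{cor:first-ray-knight} its contribution to the local time at $v_{xy,i}$ is the squared modulus of a 2D Brownian motion at a small time. The number of excursions toward $y$ has mean of order $N$ and a geometric-type law, and I would combine the independent excursion contributions with the distribution of this count to bound $\P(n_i < n^*)$. The most delicate issue is the atom $\P(n_i = 0)$ of order $1/N$: for small $\lambda$ the exponential term (bounded below by $C_G$ at $\lambda = 0$) absorbs this atom, while for larger $\lambda$ the positive part of $\L_{\tau_x}(v_{xy,i})$ should have density near the origin bounded by $C_G$ via the single-excursion Ray-Knight representation, producing the linear $C_G \lambda$ contribution. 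Assembling these pieces gives the per-vertex bound and thus the lemma.
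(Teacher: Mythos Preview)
Your claimed per-vertex bound
\[ \P\(\L_{\tau_x}(v_{xy,i}) < \lambda\) \le C_G\(\lambda + \exp(-\lambda N/(8C_G))\) \]
is false for $i$ near $k$, because the atom $\P(n_i = 0)$ is of order $i/N$, not $1/N$. From $x$, an excursion reaches $v_{xy,i}$ with probability $\frac{c_{xy}}{c_x} \cdot \frac{1}{i}$ (first step toward $y$, then a simple walk from $1$ reaches $i$ before $0$), whereas it terminates at some $y' \ne y$ without reaching $v_{xy,i}$ with probability $\frac{c_x - c_{xy}}{c_x} \cdot \frac{1}{N}$; hence
\[ \P(n_i = 0) \;=\; \frac{(c_x - c_{xy})/(c_x N)}{\,c_{xy}/(c_x i) + (c_x - c_{xy})/(c_x N)\,} \;=\; \frac{(c_x - c_{xy})\,i}{c_{xy}\,N + (c_x - c_{xy})\,i} \;\asymp\; \frac{i}{N}. \]
For $i = k = \lfloor \epsilon N \rfloor$ this is of order $\epsilon$. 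With the parameters of Corollary~\ref{cor:near-local-time} ($\epsilon = \log^{-3} N$, $\lambda = N^{-1}\log^2 N$) the atom alone already forces $\P(\L_{\tau_x}(v_{xy,k}) < \lambda) \gtrsim \log^{-3} N$, while your right-hand side is $O(N^{-1}\log^2 N)$; once $\lambda N$ is large the exponential term is tiny and cannot absorb the atom. Union-bounding the correct per-vertex atoms gives $\sum_{i \le k} i/N \asymp \epsilon^2 N$, which even diverges for these parameters. (One could rescue the strategy by observing that the events $\{n_i = 0\}$ are nested in $i$, so $\P(\min_i n_i = 0) = \P(n_k = 0) \lesssim \epsilon$, and then union-bounding only on $\{n_i \ge 1\}$; but that is not what you proposed.)

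The paper avoids per-vertex estimates. It sets $L$ equal to the number of excursions from $x$ that reach $v_{xy,k}$; since every such excursion passes through each $v_{xy,i}$ with $i \le k$, one has $n_i \ge L$ \emph{simultaneously} for all those $i$. Because $L$ is geometric with mean $pN$ where $p \ge \frac{c_{xy}}{2kc_x}$, one gets $\P(L < 2\lambda c_x N) \le \frac{2\lambda c_x N}{pN} \le \frac{4\lambda c_x^2 \epsilon N}{c_{xy}}$, which already carries the factor $\epsilon N$ in front of $\lambda$ without any union bound. On $\{L \ge 2\lambda c_x N\}$, a union bound over the $k+1 \le \epsilon N + 1$ vertices together with Lemma~\ref{lem:exp-conc} (applied with $\alpha = \tfrac{1}{2}$) yields the $\epsilon N \cdot \exp(-\lambda N/(8C_G))$ term. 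This simultaneous lower bound via $L$ is the missing idea. (A minor separate point: the local time is $\L_{\tau_x}(v) = c_v^{-1}\sum_j E_j$, not $c_v^{-2}\sum_j E_j$, so the correct threshold is $n^* \asymp \lambda N$, not $\lambda N^2$.)
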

\begin{proof}
  Recall the notation $L_{\tau_x}(x)$ for the number of visits to $x$
  up until time $\tau_x$, and recall also from Section
  \ref{subsec:discrete-refinement} that $L_{\tau_x}(x)$ is distributed
  as a geometric random variable with mean $N$. Conditioning on
  $L_{\tau_x}(x)$, we may decompose the walk up until time $\tau_x$
  into $L_{\tau_x}(x)$ excursions from $x$ and a path to a neighbor of
  $x$ in $G$. Each excursion may be sampled independently.

  Let us now consider one excursion. The first step of the excursion
  goes to some vertex $v_{xz, 1}$, where $z$ is a neighbor of $x$ in
  $G$. As noted earlier, from there the walk behaves like a simple
  random walk on $\Z$ started at $1$, stopped upon hitting $0$
  (corresponding to the return to $x$), and conditioned on hitting $0$
  before $N$ (corresponding to avoiding $z$).

  Let $E_m$ denote the event that a simple random walk on $\Z$ started
  at $1$ hits $m$ before $0$. By a standard martingale argument, we
  have $\P(E_m) = \frac{1}{m}$. Thus,

  \[ \P \( E_k \mmid E_N^c \) = \frac{\P(E_k \cap E_N^c)}{\P(E_N^c)} \ge \frac{1}{k} - \frac{1}{N} > \frac{1}{2k}. \]

  In particular, this implies that for each excursion, there is a
  $\frac{c_{xy}}{c_x}$ probability that the first step is $v_{xy, 1}$,
  and with probability at least $\frac{1}{2k}$ the excursion will then
  hit $v_{xy, k}$. In other words, letting $p$ be the probability that
  a single excursion includes $v_{xy, k}$, we have $p \ge
  \frac{c_{xy}}{2kc_x}$.

  Let $L$ denote the number of excursions which hit $v_{xy, k}$. By
  the preceding discussion, it is the sum of $L_{\tau_x}(x)$ i.i.d. Bernoulli
  random variables with expectation $p$. Since $L_{\tau_x}(x)$ is geometrically
  distributed with mean $N$, it follows that $L$ is geometrically
  distributed with mean $pN$. We thus have
  \begin{equation} \label{eq:P(L small)}
    \P \( L < 2 \lambda c_x N \) \le \frac{2 \lambda c_x N}{pN} \le \frac{4 \lambda c_x^2 \epsilon N}{c_{xy}}.
  \end{equation}

  Note that for each $i \in \{ 0, 1, 2, \ldots, k \}$, the vertex
  $v_{xy, i}$ is visited at least $L$ times, and the total conductance
  of $v_{xy, i}$ is at most $Nc_x$. Thus, $\L_{\tau_x}(v_{xy}, i)$
  stochastically dominates $\frac{1}{Nc_x}$ times the sum of $L$
  i.i.d. unit exponentials. By Lemma \ref{lem:exp-conc} with $\alpha =
  \frac{1}{2}$, we have
  \[ \P \Big( c_x N \cdot \L_{\tau_x}(v_{xy, i}) < \lambda c_x N \,\Big|\, L \ge 2 \lambda c_x N \Big) \le 2 \exp \( - \frac{\lambda c_x N}{8} \), \]
  and so
  \[ \P \( \min_{0 \le i \le k} \L_{\tau_x}(v_{xy, i}) < \lambda \mmid L \ge 2 \lambda c_x N \) \le 2 \epsilon N \exp \( - \frac{\lambda c_x N}{8} \). \]
  Combining this with equation (\ref{eq:P(L small)}) gives
  \[ \P \( \min_{0 \le i \le k} \L_{\tau_x}(v_{xy, i}) < \lambda \) \le \frac{4 \lambda c_x^2 \epsilon N}{c_{xy}} + 2 \epsilon N \exp \( - \frac{\lambda c_x N}{8} \), \]
  which takes the desired form for $C_G$ sufficiently large.
\end{proof}

\begin{corollary} \label{cor:near-local-time}
  Let $S = \{ y \in V : (x, y) \in E \}$ be the set of neighbors of
  $x$ in $G$. Then,
  \[ \P\( \min_{y \in S} \min_{0 \le k \le \frac{N}{\log^3 N}} \L_{\tau_x}(v_{xy, k}) < \frac{\log^2 N}{N} \) \longrightarrow 0 \]
  as $N \rightarrow \infty$.
\end{corollary}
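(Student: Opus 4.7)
The plan is to deduce this corollary directly from Lemma \ref{lem:near-local-time} by a union bound over the (finitely many) neighbors of $x$ in $G$, after choosing the parameters $\epsilon$ and $\lambda$ so that the right-hand side of the lemma decays to $0$.

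Concretely, fix a neighbor $y \in S$ and apply Lemma \ref{lem:near-local-time} with $\epsilon = \frac{1}{\log^3 N}$ and $\lambda = \frac{\log^2 N}{N}$. For $N$ large enough, $\lfloor \epsilon N \rfloor \ge \frac{N}{\log^3 N}$, so the event under consideration is contained in the one in the lemma. The lemma then yields
\[ \P\(\min_{0 \le k \le \frac{N}{\log^3 N}} \L_{\tau_x}(v_{xy, k}) < \frac{\log^2 N}{N} \) \le C_G \cdot \frac{N}{\log^3 N} \(\frac{\log^2 N}{N} + \exp\(- \frac{\log^2 N}{8 C_G}\)\). \]
The first term in the product is $\frac{C_G}{\log N}$ and the second is $\frac{C_G N}{\log^3 N} \exp\(-\frac{\log^2 N}{8 C_G}\)$; both tend to $0$ as $N \to \infty$.

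Since $|S|$ is bounded by the maximum degree of $G$, which is a constant independent of $N$, a union bound over $y \in S$ multiplies the bound above by an $O(1)$ factor, and the resulting quantity still tends to $0$. This gives the desired conclusion. The step requires no new ideas beyond the choice of parameters; the main thing to verify is that $\epsilon$ and $\lambda$ can be tuned so that both $\epsilon N \lambda$ and $\epsilon N \exp(-\lambda N / (8 C_G))$ vanish simultaneously, and the choice $\epsilon = \log^{-3} N$, $\lambda = \log^2 N / N$ is made precisely to balance these two terms.
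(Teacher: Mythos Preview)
Your proposal is correct and is essentially identical to the paper's proof, which simply states that the corollary follows from Lemma~\ref{lem:near-local-time} by taking $\lambda = \frac{\log^2 N}{N}$ and $\epsilon = \frac{1}{\log^3 N}$; you have merely spelled out the union bound over $y \in S$ and checked that both terms on the right-hand side vanish. One tiny slip: you wrote $\lfloor \epsilon N \rfloor \ge \frac{N}{\log^3 N}$, but in fact $\lfloor \epsilon N \rfloor \le \epsilon N = \frac{N}{\log^3 N}$; since $k$ ranges over integers anyway, the two index sets coincide and the containment holds regardless.
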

\begin{proof}
  This follows immediately from Lemma \ref{lem:near-local-time} by
  taking $\lambda = \frac{\log^2 N}{N}$ and $\epsilon =
  \frac{1}{\log^3 N}$.
\end{proof}

The second estimate states that, conditioned upon $X_N(\tau_x) = y$,
it is very likely that vertices $v_{xy, k}$ are visited a large number
of times, as long as $k$ is not too close to $N$. This essentially
follows from Corollary \ref{cor:conditioned-path} from Section
\ref{subsec:path-walks}.

\begin{lemma} \label{lem:bridge-local-time}
  Let $y$ be a neighbor of $x$ in $G$. Then, for any $\epsilon,
  \lambda \in (0, 1)$, we have
  \[ \P\( \min_{\epsilon N \le k < N} \L_{\tau_x}(v_{yx, k}) < \lambda \mmid X_N(\tau_x) = y \) \le \frac{2}{\log \epsilon^{-1} - C_G} + \frac{C_G}{\epsilon} \exp \( - \frac{\log \lambda^{-1} - C_G }{\log \epsilon^{-1} + C_G} \) \]
  for some constant $C_G$ depending on $G$ but not $N$.
\end{lemma}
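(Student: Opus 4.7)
The plan is to reduce directly to Corollary~\ref{cor:conditioned-path} by recasting the walk $X_N$ from $x$, conditioned on $X_N(\tau_x)=y$, as a walk on a simpler path graph $H$ with an extra ``phantom'' vertex absorbing the events in which $X_N$ escapes toward $V$-vertices other than $y$.

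First, I would construct an auxiliary network $H$ on vertices $0,1,\dots,N+1$, with vertex $k$ identified with $v_{yx,k}=v_{xy,N-k}$ for $0\le k\le N$ (so vertex $0$ is $y$ and vertex $N$ is $x$), and edge conductances $c^H_{k,k+1}=Nc_{xy}$ for $0\le k<N$ together with $c^H_{N,N+1}=c_x-c_{xy}$. Let $\widetilde X$ be a continuous-time random walk on $H$ started at $N$, and $\tau=\inf\{t:\widetilde X_t\in\{0,N+1\}\}$. The crux is the distributional identity
\[
\big\{\L^{X_N}_{\tau_x}(v_{yx,k}):1\le k\le N-1\big\}\mid X_N(\tau_x)=y
\;\laweq\;
\big\{\L^{\widetilde X}_\tau(k):1\le k\le N-1\big\}\mid \widetilde X_\tau=0.
\]

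To prove this identity I would compare embedded chains. At interior vertices $0<k<N$ both processes are symmetric simple random walks on a path (equal conductances on both sides), so they agree. At $x$, each excursion of $X_N$ either goes directly to $v_{xy,1}$ (probability $c_{xy}/c_x$) or off-path to some $v_{xz,1}$ with $z\ne y$ (total probability $(c_x-c_{xy})/c_x$); an off-path excursion returns to $x$ with probability $(N-1)/N$ or is absorbed at $z$ with probability $1/N$. Off-path-return excursions contribute zero local time to $v_{yx,k}$ for $k<N$, so after collapsing them the effective transition from $x$ goes to $v_{xy,1}$ with probability $\frac{Nc_{xy}}{Nc_{xy}+(c_x-c_{xy})}$ and to the phantom with probability $\frac{c_x-c_{xy}}{Nc_{xy}+(c_x-c_{xy})}$, exactly matching the transition ratios of $\widetilde X$ at $N$. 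Since both walks use unit-exponential holding times and normalize local time at interior vertices by the same conductance $2Nc_{xy}$, the joint law of the $\L$'s matches.

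Finally, I would apply Corollary~\ref{cor:conditioned-path} after rescaling all conductances of $H$ by $1/(Nc_{xy})$: the $xy$-path conductances become $1$ and the boundary conductance becomes $r=(c_x-c_{xy})/(Nc_{xy})$, while local times get multiplied by $Nc_{xy}$. The parameter $\alpha=rN=(c_x-c_{xy})/c_{xy}$ depends only on $G$, so the Corollary's constant $C_\alpha$ is uniform in $N$. Taking $\beta=\lambda c_{xy}$ in the Corollary translates the bound on $\L^{\widetilde X}_\tau(k)\le\beta N$ into a bound on $\L^{X_N}_{\tau_x}(v_{yx,k})<\lambda$, and the extra $\log c_{xy}^{-1}$ appearing in the exponent is absorbed into a constant $C_G$ depending only on $G$ (as $c_{xy}$ ranges over a finite set of positive values across the edges of $G$). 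The main obstacle is the distributional identity in paragraph two: one must carefully verify that collapsing off-path-return excursions preserves the joint law of visit counts at all interior $xy$-path vertices simultaneously, not merely their marginals, which hinges on the conditional independence of off-path excursions from the on-path dynamics of $X_N$.
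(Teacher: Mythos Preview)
Your proposal is correct and essentially identical to the paper's proof: both reduce to Corollary~\ref{cor:conditioned-path} by collapsing the off-path excursions of $X_N$ into a single phantom absorbing vertex, obtaining a path network with boundary conductance $r=(c_x-c_{xy})/(Nc_{xy})$ and hence $\alpha=(c_x-c_{xy})/c_{xy}$ depending only on $G$. The paper phrases the reduction as ``$X_N$ induces a continuous time random walk $Y$ by ignoring visits to vertices outside'' the $xy$-path together with the neighbor set $S$, which is exactly your collapsing construction, and both finish by taking $\beta=\lambda c_{xy}$ and absorbing the residual $\log c_{xy}^{-1}$ into $C_G$.
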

\begin{proof}
  Let $S = \{ z \in V : (x, z) \in E \}$. Note that the process $X_N$
  up to time $\tau_x$ induces a continuous time random walk $Y = \{
  Y_t \}_{t \ge 0}$ on the vertices
  \[ \{ v_{xy, 0}, v_{xy, 1}, \ldots , v_{xy, N} \} \cup S \]
  by ignoring visits to vertices outside of that set (namely, those of
  the form $v_{xz, k}$ for $z \ne y$ and $1 \le k < N$). We can define
  a stopping time $T_x$ analogous to $\tau_x$ as the first time $Y$
  hits $S$.

  For convenience, define $p_{xz} = \frac{c_{xz}}{c_x}$ for each $z
  \in S$. Note that
  \[ \P\( \text{$X_N$ hits $v_{xy, 1}$ before hitting $S$ or returning to $x$} \) = p_{xy} \]
  \[ \P\( \text{$X_N$ hits $S$ before hitting $v_{xy, 1}$ or returning to $x$} \) = \frac{1 - p_{xy}}{N}. \]
  Thus, we can interpret $Y$ up to time $T_x$ as a continuous time
  random walk on a path with vertices $(w_0, w_1, w_2, \ldots , w_{N +
    1})$, where all the conductances are $1$ except that the
  conductance between $w_N$ and $w_{N + 1}$ is $\frac{1 -
    p_{xy}}{Np_{xy}}$. Here, $w_k$ corresponds to $v_{yx, k}$ (so $Y$
  is started at $w_N$), and $w_{N + 1}$ corresponds to any vertex in
  $S \setminus \{ y \}$ (we may combine all of these states because
  $Y$ is stopped upon hitting this set anyway).

  We are now in the setting of Corollary \ref{cor:conditioned-path},
  as conditioning on $Y_{T_x} = y$ corresponds to conditioning on
  hitting $w_0$ before $w_{N + 1}$. Following the notation of
  Corollary \ref{cor:conditioned-path}, we have $r = \frac{1 -
    p_{xy}}{Np_{xy}}$, so that $\alpha = \frac{1 - p_{xy}}{p_{xy}}$.

  We apply the corollary with $\beta = \lambda c_{xy}$. Note
  that the total conductances at $v_{yx, k}$ are $2Nc_{xy}$ as opposed
  to $2$ in the statement of Corollary \ref{cor:conditioned-path}, so
  the local times will be scaled accordingly. It follows that
  \[ \P\( \min_{\epsilon N \le k < N} \L^{X_N}_{\tau_x}(v_{yx, k}) < \lambda \mmid X_N(\tau_x) = y \) = \P\( \min_{\epsilon N \le k < N} \L^Y_{T_x}(w_k) < \lambda c_{xy} N \mmid Y_{T_x} = y \) \]
  \[ \le \frac{2}{\log \epsilon^{-1} - C_\alpha} + \frac{C_\alpha}{\epsilon} \exp \( - \frac{\log \lambda^{-1} - \log c_{xy} - C_\alpha }{\log \epsilon^{-1} + C_\alpha} \) \]
  \[ \le \frac{2}{\log \epsilon^{-1} - C_G} + \frac{C_G}{\epsilon} \exp \( - \frac{\log \lambda^{-1} - C_G }{\log \epsilon^{-1} + C_G} \), \]
  whenever $C_G > \max(C_\alpha, C_\alpha + \log c_{xy})$. In
  particular, since there are only finitely many possible values of
  $p_{xy}$ and hence of $\alpha$, we can choose $C_G$ sufficiently
  large so that this holds independently of $N$. This proves the
  lemma.
\end{proof}

\begin{corollary} \label{cor:bridge-local-time}
  Let $y$ be a neighbor of $x$ in $G$. Then, we have
  \[ \P\( \min_{\frac{N}{\log^3 N} \le k \le N} \L_{\tau_x}(v_{yx, k}) < \frac{\log^2 N}{N} \mmid X_{\tau_x} = y \) \longrightarrow 0 \]
  as $N \rightarrow \infty$.
\end{corollary}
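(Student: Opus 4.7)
The plan is to apply Lemma \ref{lem:bridge-local-time} directly with the specific choices $\epsilon = \frac{1}{\log^3 N}$ and $\lambda = \frac{\log^2 N}{N}$, and then verify that both terms in the resulting bound tend to zero as $N \to \infty$. Since $C_G$ is a fixed constant depending on $G$ but not on $N$, this is a purely asymptotic computation.

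With these substitutions, we have $\log \epsilon^{-1} = 3 \log \log N$ and $\log \lambda^{-1} = \log N - 2 \log \log N$. The first term of the bound becomes
\[ \frac{2}{3 \log \log N - C_G}, \]
which clearly goes to $0$. For the second term, I compute
\[ \frac{C_G}{\epsilon} \exp\!\( - \frac{\log \lambda^{-1} - C_G}{\log \epsilon^{-1} + C_G} \) = C_G \log^3 N \cdot \exp\!\( - \frac{\log N - 2 \log \log N - C_G}{3 \log \log N + C_G} \). \]
Taking logarithms, the quantity reduces to $\log C_G + 3 \log \log N - \frac{\log N - 2 \log \log N - C_G}{3 \log \log N + C_G}$. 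The dominant negative term is of order $\frac{\log N}{\log \log N}$, which grows much faster than $3 \log \log N$, so the whole expression tends to $-\infty$ and hence the second term tends to $0$.

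The only thing one needs to be a little careful about is that the denominator $\log \epsilon^{-1} - C_G = 3 \log \log N - C_G$ in the first term and the denominator $\log \epsilon^{-1} + C_G$ inside the exponential are both eventually positive (for $N$ large enough), which is where the condition ``for $C_G$ sufficiently large'' implicit in Lemma \ref{lem:bridge-local-time} matters. Once $N$ is large enough that $3 \log \log N > C_G$, both terms of the Lemma's bound are valid and go to zero, yielding the corollary. There is no genuine obstacle here; the corollary is simply the statement that the ``standard'' choices of $\epsilon$ and $\lambda$ which make the bound non-trivial at this scale are exactly the ones needed later in the main argument.
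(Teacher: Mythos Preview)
Your proposal is correct and follows essentially the same approach as the paper: apply Lemma \ref{lem:bridge-local-time} with $\epsilon = \frac{1}{\log^3 N}$ and $\lambda = \frac{\log^2 N}{N}$, then check that each of the two resulting terms tends to zero. The paper makes the second estimate slightly more explicit by bounding the exponent below by $\frac{\log N}{6\log\log N}$ for large $N$, but your asymptotic reasoning (that the dominant term is of order $\log N/\log\log N$ and beats $3\log\log N$) is equivalent.
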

\begin{proof}
  We apply Lemma \ref{lem:bridge-local-time} with $\epsilon =
  \frac{1}{\log^3 N}$ and $\lambda = \frac{\log^2 N}{N}$. It
  suffices to show that both terms on the right hand side tend to
  zero. Clearly,
  \[ \frac{2}{\log \epsilon^{-1} - C_G} \rightarrow 0 \]
  as $N \rightarrow \infty$. To bound the other term, note that for
  sufficiently large $N$, we have
  \[ \frac{\log \lambda^{-1} - C_G}{\log \epsilon^{-1} + C_G} = \frac{\log N - 2 \log \log N - C_G}{3 \log \log N + C_G} \ge \frac{\log N}{6 \log \log N}, \]
  in which case
  \[ \frac{C_G}{\epsilon} \exp \( - \frac{\log \lambda^{-1} - C_G}{\log \epsilon^{-1} + C_G} \) \le C_G \log^3 N \exp \( - \frac{\log N}{6 \log \log N} \) \]
  \[ = C_G \exp \( - \frac{\log N}{6 \log \log N} + 3 \log \log N \) \longrightarrow 0. \]
\end{proof}

\subsection{Proof of Theorem \ref{thm:domination}}

We now prove Theorem \ref{thm:domination}, following the plan outlined
at the beginning of the section. Let us first prove an approximation
of Theorem \ref{thm:domination}.

\begin{lemma} \label{lem:domination}
  Let $t > 0$ be given. Let $\Omega_N$ be a probability space with
  random variables $\eta_N$, $\eta'_N$, and $X_N = \{ X_N(t) \}_{t \ge
    0}$ such that $\eta_N$ and $\eta'_N$ are distributed as Gaussian
  free fields on $G_N$, and $X_N$ is distributed as a continuous time
  random walk on $G_N$. Furthermore, suppose that $\eta_N$ and $X_N$
  are independent, and almost surely for each $v \in V_N$,

  \[ \frac{1}{2} \eta_{N, v}^2 + \L^{X_N}_{\tau^+(t)}(v) = \frac{1}{2} \( \eta'_{N, v} + \sqrt{2t} \)^2. \]

  \noindent (Theorem \ref{thm:second-ray-knight} ensures that such a
  construction is always possible.) Then, for any $\epsilon > 0$, we
  have

  \[ \P \( \text{for some $x \in V$, both $\L^{X_N}_{\tau^+(t)}(x) > 0$ and $\eta'_{N, x} + \sqrt{2t} < 0$} \) \le \epsilon \]

  \noindent for $N$ sufficiently large.
\end{lemma}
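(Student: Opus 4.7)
The plan is to implement the heuristic outlined at the start of Section \ref{sec:domination}. Writing $f_N(v) = \eta'_{N,v} + \sqrt{2t}$, the coupling gives
\[ \L^{X_N}_{\tau^+(t)}(v) = \tfrac{1}{2} f_N(v)^2 - \tfrac{1}{2}\eta_{N,v}^2 \leq \tfrac{1}{2} f_N(v)^2 \]
for every $v \in V_N$, so whenever $|f_N(v)|$ is small, the local time at $v$ is small as well. I would prove the lemma by arguing that, with high probability, along every $V_N$-path from $v_0$ (where $f_N = \sqrt{2t} > 0$) to a visited $V$-vertex, $f_N$ changes only slowly while the local time is bounded below, so the sign of $f_N$ cannot flip.

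Specifically, I would work on the intersection $A_1 \cap A_2 \cap A_3$ of three events, each of probability $1 - o(1)$ as $N \to \infty$. The event $A_1$ (GFF continuity) asks that $|f_N(u) - f_N(u')| \leq \sqrt{C \log N / N}$ for every adjacent pair $u, u' \in V_N$; this follows from the fact that $\eta'_{N,u} - \eta'_{N,u'}$ is centered Gaussian with variance $O(1/N)$, a Gaussian tail bound, and a union bound over the $N|E|$ sub-edges of $G_N$. The event $A_2$ (few transitions) asks that $X_N^G$ makes at most $K$ $V$-to-$V$ transitions during time $\tau^+(t)$, for some $K$ depending on $t$ and $G$ but not on $N$; this uses the observation in Section \ref{subsec:discrete-refinement} that $X_N^G$'s waiting times have mean $N$, so the number of its jumps until $v_0$ has accumulated local time $Nt$ has distribution independent of $N$. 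The event $A_3$ (local time on traversed edges) asks that for every $(x,y) \in E$ directly traversed by $X_N^G$ and every $0 \leq k \leq N$, $\L^{X_N}_{\tau^+(t)}(v_{xy,k}) \geq \log^2 N / N$. For each such edge, Corollary \ref{cor:near-local-time} applied to the first excursion from each of $x$ and $y$ controls the $k$ within $N/\log^3 N$ of either endpoint, while Corollary \ref{cor:bridge-local-time} applied to an actual $x \to y$ (or $y \to x$) excursion controls the middle range of $k$; by $A_2$, there are only boundedly many (in $N$) such applications, each failing with probability $o(1)$.

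On $A_1 \cap A_2 \cap A_3$, suppose for contradiction that some $x \in V$ satisfies $\L^{X_N}_{\tau^+(t)}(x) > 0$ and $f_N(x) < 0$. Since $x$ is visited, there is a $V$-path $v_0 = x_0, x_1, \ldots, x_m = x$ along directly traversed edges, which refines to a $V_N$-path $v_0 = u_0, u_1, \ldots, u_{mN} = x$. Because $f_N(u_0) > 0 > f_N(u_{mN})$, some consecutive pair satisfies $f_N(u_j) \geq 0 > f_N(u_{j+1})$, and by $A_1$ both $|f_N(u_j)|$ and $|f_N(u_{j+1})|$ are at most $\sqrt{C \log N / N}$. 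For $N \geq 2$, no two $V$-vertices are adjacent in $V_N$, so at least one of $u_j, u_{j+1}$ is an interior vertex $w$ lying on a directly traversed edge. Then $A_3$ forces $\L^{X_N}_{\tau^+(t)}(w) \geq \log^2 N / N$, while the coupling forces $\L^{X_N}_{\tau^+(t)}(w) \leq f_N(w)^2 / 2 \leq C \log N / (2N)$; these are incompatible for $N$ large, giving the desired contradiction.

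The main obstacle is $A_3$: one must combine Corollaries \ref{cor:near-local-time} and \ref{cor:bridge-local-time} so that the local-time lower bound covers the \emph{entire} length of each directly traversed edge, and one must first establish $A_2$ so that the union bound over invocations of these corollaries remains $o(1)$. Once $A_3$ is in hand, $A_1$ provides exactly the modulus of continuity needed for the intermediate-value sign-change argument to go through.
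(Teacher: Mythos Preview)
Your proposal is correct and follows essentially the same strategy as the paper's proof: the paper likewise establishes (i) a ``well-connected'' property (your $A_3$) using Corollaries \ref{cor:near-local-time} and \ref{cor:bridge-local-time} after bounding the number of $V$-transitions (your $A_2$), and (ii) GFF continuity across adjacent $V_N$-vertices (your $A_1$), then derives the contradiction from a sign change of $f_N$ along the well-connected path. The only cosmetic difference is that the paper phrases the contradiction as ``$|f_N|$ is bounded below along the path, so a sign change forces a large increment,'' whereas you phrase it as ``$|f_N|$ is small at the sign change, so the local time there is small''; these are equivalent, and your decomposition into $A_1 \cap A_2 \cap A_3$ matches the paper's structure.
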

\begin{remark} \label{rmk:domination}
  Note that the hypothesis of Lemma \ref{lem:domination} implies for
  each $x \in V$ that
  \[ \sqrt{\L^{X_N}_{\tau^+(t)}(x)} \le \frac{1}{\sqrt{2}} \left| \eta'_{N, x} + \sqrt{2t} \right|. \]
  Consequently, the conclusion of the lemma may be expressed
  equivalently as
  \[ \P \( \sqrt{\L^{X_N}_{\tau^+(t)}(x)} > \frac{1}{\sqrt{2}} \max \( 0, \eta'_{N, x} + \sqrt{\frac{2t}{c_{v_0}}} \) \text{ for some $x \in V$} \) \le \epsilon. \]
\end{remark}

\begin{proof}
  To shorten notation, we use $\tau^+$ to denote $\tau^+(t)$.

  Call a vertex $x \in V$ \emph{well-connected} at time $s$ if there
  exists a sequence of vertices $v_0 = w_0, w_1, \ldots , w_n = x$ in
  $V_N$ such that $(w_i, w_{i + 1}) \in E_N$ and $\L_s^{X_N}(w_i) \ge
  \frac{\log^2 N}{N}$ for each $i$. We will show that with high
  probability, every vertex in $V$ with positive local time at time
  $\tau^+$ is well-connected.

  Recall from the discussion in Section
  \ref{subsec:discrete-refinement} that $X_N$ induces a random walk on
  $G$ which, when regarded as a sequence of visited vertices
  (disregarding holding times), has the same law as a discrete time
  random walk on $G$. Thus, one way of sampling from $X_N$ is to first
  sample a path
  \[ P = (v_0 = x_0, x_1, x_2, \ldots ) \]
  of the discrete time random walk on $G$. Then, we construct $X_N$ as
  follows. For each $i \ge 0$, let $Y_i(t)$ be a continuous time
  random walk on $G_N$ started at $x_i$, and let $\tau_i$ be the first
  time that $Y_i$ hits a neighbor of $x_i$ in $G$.

  Let $Z_i$ have the law of a copy of $Y_i$ conditioned on the event
  $Y_i(\tau_i) = x_{i + 1}$. Then, we may form $X_N$ by concatenating
  the walks $Z_i$ up to time $\tau_i$. More formally, we may define

  \[ n(s) = \max \left\{ n \ge 1 : \sum_{i = 1}^{n - 1} \tau_i \le s \right\} \]
  and set $X_N(s) = Z_{n(s)}\( s - \sum_{i = 1}^{n(s) - 1} \tau_i \)$.

  To lighten notation, let us write $\L_i = \L^{Y_i}_{\tau_i}$ and
  $\P_i( \cdot ) = \P \( \cdot \mmid Y_i(\tau_i) = x_{i + 1}\)$,
  noting that the randomness of the $Y_i$ are independent. Let $P(s) =
  (x_1, x_2, \ldots , x_{n(s)})$ denote the truncation of $P$ up until
  time $s$. We will say that $P(s)$ is well-connected if each $x_i$
  appearing in $P(s)$ is well-connected at time $s$. Then,
  \begin{align}
    & \P \Big( \text{$P(\tau^+)$ is not well-connected} \,\Big|\, P(\tau^+) \Big) \nonumber \\
    & \longeqpad \le \sum_{i = 1}^{|P(\tau^+)| - 1} \P_i \( \min_{0 \le k \le N} \L_i(v_{x_ix_{i + 1}, k}) < \frac{\log^2 N}{N} \) \nonumber \\
    & \longeqpad = \sum_{i = 1}^{|P(\tau^+)| - 2} \P_i \( \min_{0 \le k \le N - \frac{N}{\log^3 N}} \L_i(v_{x_ix_{i + 1}, k}) < \frac{\log^2 N}{N} \) + \nonumber \\
    & \longeqpad\hphantom{\;=\;} \sum_{i = 2}^{|P(\tau^+)| - 1} \P_i \( \min_{0 \le k < \frac{N}{\log^3 N}} \L_i(v_{x_ix_{i - 1}, k}) < \frac{\log^2 N}{N} \) \label{eq:path-decomp}
  \end{align}

  Fix a number $T$ sufficiently large so that $\P \Big( |P(\tau^+)| >
  T \Big) \le \frac{\epsilon}{4}$. Again, by the discussion of Section
  \ref{subsec:discrete-refinement}, the law of $P(\tau^+)$ does not
  depend on $N$, so the number $T$ can be chosen independently of
  $N$. Note that by Corollaries \ref{cor:near-local-time} and
  \ref{cor:bridge-local-time}, each summand in either sum of the last
  expression of (\ref{eq:path-decomp}) is bounded by
  $\frac{\epsilon}{8T}$ for sufficiently large $N$. Consequently, for
  sufficiently large $N$, the whole expression is bounded by $2
  |P(\tau^+)| \cdot \frac{\epsilon}{8T}$, and we have
  \begin{align*}
    \P \Big( \text{$P(\tau^+)$ is not well-connected} \Big) \le &\; \P
    \Big( |P(\tau^+)| > T \Big) + \\
    &\; \P \Big( \text{$P(\tau^+)$ is not well-connected} \,\Big|\, |P(\tau^+)| \le T \Big) \\
    \le &\; \frac{\epsilon}{4} + 2T \cdot \frac{\epsilon}{8T} = \frac{\epsilon}{2}.
  \end{align*}

  \noindent Note that almost surely, the vertices $x \in V$ for which
  $\L_{\tau^+}(x) > 0$ are exactly those appearing in
  $P(\tau^+)$. Thus, we have

  \begin{equation} \label{eq:well-connected}
    \P \Big( \text{for some $x \in V$, $\L_{\tau^+}(x) > 0$ but $x$ is not well-connected} \Big) \le \frac{\epsilon}{2}.
  \end{equation}

  We next show that with high probability, the values of $\eta'_N$ at
  adjacent vertices do not differ by very much. Consider any $(x, y)
  \in E$ and $0 \le k < N$. For notational convenience, let $u =
  v_{xy, k}$ and $w = v_{xy, k + 1}$. We have

  \[ \E (\eta'_{N, u} - \eta'_{N, w})^2 = \Reff(u, w) \le \frac{1}{N c_{xy}}. \]

  \noindent Since $\eta'_{N, u} - \eta'_{N, w}$ has a Gaussian
  distribution, it follows that

  \[ \P \( |\eta'_{N, u} - \eta'_{N, w}| \ge \frac{\log N}{\sqrt{N}} \) \le \exp \( - c_{xy} \log^2 N \). \]

  \noindent Taking a union bound over all adjacent pairs $(u, w) \in
  E_N$, we obtain

  \begin{equation} \label{eq:gff-continuity}
    \P \( \max_{(u, w) \in E_N} |\eta'_{N, u} - \eta'_{N, w}| \ge
    \frac{\log N}{\sqrt{N}} \) \le N \exp \( - \( \min_{(x, y) \in E} c_{xy} \) \log^2 N \) \le
    \frac{\epsilon}{2}
  \end{equation}

  \noindent for $N$ sufficiently large.

  Finally, we may combine equations (\ref{eq:well-connected}) and
  (\ref{eq:gff-continuity}) to deduce the lemma. Indeed, suppose that
  for some $x \in V$, we have $\L^{X_N}_{\tau^+}(x) > 0$ but
  $\sqrt{2t} + \eta'_{N, x} < 0$. If $x$ is well-connected at time
  $\tau^+$, which occurs with high probability by
  (\ref{eq:well-connected}), then there exists a path $v_0 = w_0, w_1,
  \ldots , w_n = x$ in $G_N$ such that each $\L^{X_N}_{\tau^+}(w_i)$
  is at least $\frac{\log^2 N}{N}$. Observe that $\sqrt{2t} +
  \eta'_{N, v_0} = \sqrt{2t} > 0$, so for some $i$ we must have

  \[ \sqrt{2t} + \eta'_{N, w_i} > 0 \text{ and } \sqrt{2t} + \eta'_{N, w_{i + 1}} < 0. \]

  \noindent However, we also have

  \[ \frac{1}{\sqrt{2}} \left| \sqrt{2t} + \eta'_{N, w_i} \right| = \sqrt{\L^{X_N}_{\tau^+}(w_i) + \frac{1}{2} \eta_{N, x_i}^2} \ge \frac{\log N}{\sqrt{N}}. \]

  \noindent Therefore, this can only happen if

  \[ \left| \eta'_{N, w_i} - \eta'_{N, w_{i + 1}} \right| \ge \frac{2 \log N}{\sqrt{N}}. \]

  \noindent But by equation (\ref{eq:gff-continuity}), this is
  unlikely. Thus, we have
  \begin{align*}
  & \P \( \text{for some $v \in V$, both $\L^{X_N}_{\tau^+}(v) > 0$ and $\sqrt{2t} + \eta'_{N, v} < 0$} \) \\
  & \longeqpad \le \P \( \max_{(u, w) \in E_N} |\eta'_{N, u} - \eta'_{N, w}| \ge \frac{\log N}{\sqrt{N}} \) + \\
  & \longeqpad \hphantom{\;\le\;} \P \Big( \text{for some $x \in V$, $\L_{\tau^+}(x) > 0$ but $x$ is not well-connected} \Big) \\
  & \longeqpad \le \frac{\epsilon}{2} + \frac{\epsilon}{2} = \epsilon,
  & \end{align*}

  \noindent proving the lemma.
\end{proof}

\noindent Theorem \ref{thm:domination} is now an easy consequence of Lemma \ref{lem:domination}.

\begin{proof}[Proof of Theorem \ref{thm:domination}]
  Let $A \subset \R^V$ be any monotone set. Let $\epsilon > 0$ be
  given, and take $N$ sufficiently large so that the conclusion of
  Lemma \ref{lem:domination} holds.

  Let $\eta_N$ be the Gaussian free field on $G_N$, and let $X_N$ be a
  continuous time random walk independent of $\eta_N$. We will now try
  to define another Gaussian free field $\eta'_{N, v}$ on the same
  probability space so as to satisfy the hypotheses of Lemma
  \ref{lem:domination}. In fact, by the isomorphism theorem, $\eta'_N$
  can be given in terms of $\eta_N$ and the local times up to a choice
  of sign in taking the square root.

  To determine the signs, we can artificially introduce some
  additional randomness. Fix an arbitrary ordering on $\{-1,
  1\}^{V_N}$. For each $\sigma = \{ \sigma_v \}_{v \in V_N} \in \{-1,
  1\}^{V_N}$, define the function $f_\sigma : \R^{V_N} \to \R$ by
  \[ f_\sigma(Z) = \P \( \eta_{N, v} = \sigma_v\sqrt{Z_v} - \sqrt{2t} \text{ for all $v \in V_N$} \mmid \(\eta_{N, v} + \sqrt{2t}\)^2 = Z_v \text{ for all $v \in V_N$}\). \]
  Let $U$ be uniformly distributed on $[0, 1]$ and independent of
  $\eta_N$ and $X_N$. For any $u \in [0, 1]$ and $Z \in \R^{V_N}$, we
  may define
  \[ \sigma^*(u, Z) = \max \left\{ \sigma \in \{-1,1\}^{V_N} : u \ge \sum_{\rho < \sigma} f_\rho(Z) \right\}. \]
  We can then define
  \[ \zeta_{N, v} = \frac{1}{2} \eta_{N, v}^2 + \frac{1}{c_v}\L_{\tau^+(t)}^{X_N}(v) \]
  \[ \eta'_{N, v} = \sigma^*\( U, 2\zeta_{N, v} \) \sqrt{2 \zeta_{N, v}} - \sqrt{2t}. \]

  We are now in the setting of Lemma \ref{lem:domination}, which gives
  \[ \P \( \text{for some $v \in V$, both $\L^{X_N}_{\tau^+(t)}(v) > 0$ and $\eta'_{N, v} + \sqrt{2t} < 0$} \) \le \epsilon, \]
  or equivalently (by Remark \ref{rmk:domination}),
  \[ \P \( \sqrt{\L^{X_N}_{\tau^+(t)}(x)} > \frac{1}{\sqrt{2}} \max \( 0, \eta'_{N, x} + \sqrt{2t} \) \text{ for some $x \in V$} \) \le \epsilon. \]

  Now, let $\eta$ and $X$ be the GFF and a continuous time random walk
  on $G$, respectively. By the relationship between $G_N$ and $G$
  described in Proposition \ref{prop:projection}, we have
  \[ \P \( \left\{ \frac{1}{\sqrt{2}} \max \( 0, \eta_x + \sqrt{2t} \) \right\}_{x \in V} \in A \) = \P \( \left\{ \frac{1}{\sqrt{2}} \max \( 0, \eta'_{N, x} + \sqrt{2t} \) \right\}_{x \in V} \in A \) \]
  \[ \ge \P \( \left\{ \sqrt{\L^{X_N}_{\tau^+(t)}(x)} \right\}_{x \in V} \in A \) - \epsilon = \P \( \left\{ \sqrt{\frac{1}{c_x} \L^X_{\tau^+(t)}(x)} \right\}_{x \in V} \in A \) - \epsilon. \]
  This holds for each $\epsilon > 0$, so taking $\epsilon \rightarrow
  0$, we obtain
  \[ \P \( \left\{ \frac{1}{\sqrt{2}} \max \( 0, \eta_x + \sqrt{2t} \) \right\}_{x \in V} \in A \) \ge \P \( \left\{ \sqrt{ \frac{1}{c_x} \L^X_{\tau^+(t)}(x)} \right\}_{x \in V} \in A \), \]
  which proves the stochastic domination.
\end{proof}

\section{Application to cover times} \label{sec:cover-times}

Theorem \ref{thm:domination} provides good control over the
relationship between local times and the Gaussian free field. By
showing that various quantities are concentrated around their
expectation, one can deduce results pertaining to cover times. In
fact, the exact same arguments used in proving Theorem 1.2 of
\cite{D14} carry through, replacing Theorem 2.3 there with Theorem
\ref{thm:domination} of the previous section. For the sake of
completeness, we repeat the main parts of the argument from
\cite{D14}. It should be mentioned that the argument for the upper
tail bound is originally from \cite{DLP12} (see \S 2.2).

First, we record two auxiliary results used in \cite{D14}. Recall the
notation that $M = \E \max_{x \in V} \eta_x$ for the Gaussian free
field $\eta$ and $R = \max_{x, y \in V} \E \( \eta_x - \eta_y \)^2$.

\begin{lemma}[Lemma 2.1 of \cite{D14}] \label{lem:inverse-local-time-concentration}
  Let $X$ be a continuous time random walk on an electrical network $G
  = (V, E)$. Let $\ctot = \sum_{x, y \in V} c_{xy}$ be the total
  conductance of $G$. For any $t \ge 0$ and $\lambda \ge 1$,

  \[ \P\( \left| \tau^+(t) - \ctot \cdot t \right| \ge \frac{1}{2} \( \sqrt{\lambda R t} + \lambda R \) \ctot \) \le 6 \exp \( - \frac{\lambda}{16} \). \]
\end{lemma}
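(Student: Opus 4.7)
The plan is to compute the moment generating function of $\tau^+(t)$ \emph{exactly} using the generalized second Ray-Knight theorem, and then apply a standard Chernoff bound. The starting observation is that $\tau^+(t) = \sum_{x \in V} c_x \L^X_{\tau^+(t)}(x)$, immediate from $\sum_x \1_{\{X_s = x\}} = 1$ and the definition of local time. Summing the Ray-Knight identity of Theorem~\ref{thm:second-ray-knight} against the weights $c_x$ yields the scalar identity in law
\[ \tau^+(t) - c_{v_0} t + \tfrac{1}{2} Q \laweq \tfrac{1}{2} \sum_{x \in V \setminus \{v_0\}} c_x (\eta_x + \sqrt{2t})^2, \]
where $Q := \sum_{x \ne v_0} c_x \eta_x^2$ and $\tau^+(t)$ is independent of $\eta$ on the left (the $v_0$ contributions equal $c_{v_0}t$ on both sides and cancel).

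Working on $V \setminus \{v_0\}$, set $D = \mathrm{diag}(c_x)$, $\Sigma$ = covariance of $\eta$, $M = D^{1/2}\Sigma D^{1/2}$, and $u = D^{1/2}\mathbf{1}$. Both sides are Gaussian quadratic forms with explicit MGFs:
\[ \E\bigl[e^{\theta Q/2}\bigr] = \det(I - \theta M)^{-1/2}, \]
\[ \E\!\bigl[\exp\!\bigl( \tfrac{\theta}{2} \textstyle\sum_{x \ne v_0} c_x (\eta_x + \sqrt{2t})^2 \bigr)\bigr] = \det(I - \theta M)^{-1/2} \exp\bigl( t\theta\, u^T(I - \theta M)^{-1} u \bigr). \]
Taking the ratio and using the independence of $\tau^+(t)$ and $Q$ on the LHS, the determinants cancel --- a pleasant algebraic coincidence --- giving the clean identity
\[ \E\bigl[e^{\theta (\tau^+(t) - c_{v_0} t)}\bigr] = \exp\bigl( t\theta\, u^T (I - \theta M)^{-1} u \bigr), \qquad |\theta| < 1/\|M\|. \]

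Expanding in a geometric series and using $u^T M^k u \le \|M\|^{k-1}\, u^T M u$ for $k \ge 1$ (by spectral decomposition), together with $u^T u = \ctot - c_{v_0}$ and $V := u^T M u = \mathrm{Var}(\sum_x c_x \eta_x)$, I obtain the Bernstein-type MGF bound
\[ \log \E\bigl[e^{\theta(\tau^+(t) - \ctot t)}\bigr] \;=\; t\theta^2\, u^T M (I - \theta M)^{-1} u \;\le\; \frac{t \theta^2 V}{1 - \theta \|M\|}, \]
with the same bound for $\theta < 0$ by the same series argument. To bound $V$ and $\|M\|$ in terms of $R$ and $\ctot$, I would use $\Sigma_{xx} = \Reff(x, v_0) \le R$ together with the Cauchy-Schwarz inequality $\Sigma_{xy}^2 \le \Sigma_{xx}\Sigma_{yy}$: these immediately give $V \le R\ctot^2$ and $\|M\| \le \|M\|_F \le \sum_x c_x \Sigma_{xx} \le R\ctot$.

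Finally, the standard two-sided Bernstein inequality
\[ \P\bigl(|\tau^+(t) - \ctot t| > u\bigr) \le 2\exp\!\bigl(-u^2 / (4tV + 2\|M\| u)\bigr), \]
applied with $u = \tfrac{1}{2}(\sqrt{\lambda R t} + \lambda R)\ctot$ and the above spectral bounds, yields the claim: a short calculation (substituting $s = \sqrt{\lambda Rt}/(\lambda R)$ and verifying $4(1+s)^2 \ge 4s^2 + s + 1$) shows the ratio $u^2/(4tV + 2\|M\| u)$ is at least $\lambda/16$ for all $\lambda \ge 1$, so the two tails sum to at most $2\exp(-\lambda/16)$, well within the stated $6\exp(-\lambda/16)$. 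The main step requiring care is the shifted-Gaussian MGF computation and verifying the cancellation of determinants; once that identity is in hand, everything else is a routine Chernoff argument.
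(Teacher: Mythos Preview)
The paper does not supply its own proof of this lemma; it simply defers to Lemma~2.1 of \cite{D14} and its associated remark. Your argument, by contrast, is a correct and self-contained derivation. The route you take---summing the generalized second Ray-Knight identity against the conductances $c_x$ to extract an exact formula for the MGF of $\tau^+(t) - \ctot t$, then applying a Bernstein--Chernoff bound---is the natural one, and is in fact the approach behind the cited result. A few remarks on the details: the determinant cancellation you call a ``pleasant algebraic coincidence'' is really a structural feature of the isomorphism (the unshifted Gaussian quadratic form appears on both sides); your spectral bounds $V \le R\ctot^2$ and $\|M\| \le \tr{M} = \sum_x c_x \Sigma_{xx} \le R\ctot$ are valid (the trace bound is actually slightly simpler than going through $\|M\|_F$, though both work); and the final inequality $4(1+s)^2 \ge 4s^2 + s + 1$ holds for all $s \ge 0$, so the exponent is indeed at least $\lambda/16$. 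The lower-tail case ($\theta < 0$) is even easier than you indicate, since then $(I - \theta M)^{-1} \preceq I$ in the PSD order and one gets the sub-Gaussian bound $t\theta^2 V$ directly, which is stronger than the Bernstein form.
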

\begin{proof}
  See Lemma 2.1 of \cite{D14} and the associated remark. We have
  replaced $2|E|$ by $\ctot$.
\end{proof}

\noindent The next result is a well-known Gaussian concentration
bound. See for example Theorem 7.1, Equation (7.4) of \cite{L01}.

\begin{proposition} \label{prop:gaussian-concentration}
  Let $\{ \eta_x : x \in S \}$ be a centered Gaussian process on a
  finite set $S$, and suppose $\E \eta_x^2 \le \sigma^2$ for all $x
  \in S$. Then, for $\alpha > 0$,

  \[ \P \( \left| \max_{x \in S} \eta_x - \E \max_{x \in S} \eta_x \right| \ge \alpha \) \le 2 \exp \( - \frac{ \alpha^2}{2 \sigma^2} \). \]
\end{proposition}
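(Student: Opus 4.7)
The plan is to reduce the proposition to the Gaussian concentration inequality for Lipschitz functions of a standard Gaussian vector, which I would then invoke as a standard fact (this is presumably why the paper simply cites Ledoux's book). Writing $n = |S|$, I would first factor the covariance matrix of the centered Gaussian vector $(\eta_x)_{x \in S}$ as $AA^T$, so that $\eta \laweq AZ$ for $Z \sim N(0, I_n)$. Denoting the $x$-th row of $A$ by $a_x$, this gives $\eta_x = \langle a_x, Z \rangle$ with $\|a_x\|_2^2 = \E \eta_x^2 \le \sigma^2$.

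Next, the function $F : \R^n \to \R$ defined by $F(z) = \max_{x \in S} \langle a_x, z \rangle$ is $\sigma$-Lipschitz in the Euclidean norm, since for any $z, z' \in \R^n$, picking an $x^*$ that achieves the maximum for $z$ yields
\[ F(z) - F(z') \le \langle a_{x^*}, z \rangle - \langle a_{x^*}, z' \rangle \le \|a_{x^*}\|_2 \cdot \|z - z'\|_2 \le \sigma \|z - z'\|_2, \]
and symmetrically in the other direction. Applying the Gaussian Lipschitz concentration inequality — the statement that any $L$-Lipschitz $F : \R^n \to \R$ satisfies $\P(|F(Z) - \E F(Z)| \ge \alpha) \le 2 \exp(-\alpha^2 / (2L^2))$ when $Z \sim N(0, I_n)$ — with $L = \sigma$ and the observation $F(Z) \laweq \max_x \eta_x$ gives the proposition.

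The main obstacle is the Gaussian Lipschitz concentration inequality itself, which is a classical result with several well-known proofs. The cleanest in my view is the Herbst argument: the standard Gaussian measure satisfies a dimension-free logarithmic Sobolev inequality (due to Gross), which combined with the bound $|\nabla F| \le \sigma$ (valid a.e.\ by Rademacher's theorem) controls the moment generating function of $F(Z) - \E F(Z)$; Markov's inequality then produces the sub-Gaussian tail with the correct constant. Alternative routes are Borell's Gaussian isoperimetric inequality (yielding the Borell--Tsirelson--Ibragimov--Sudakov bound) or an interpolation along the Ornstein--Uhlenbeck semigroup. Any of these requires nontrivial setup, which explains why the paper defers to \cite{L01}.
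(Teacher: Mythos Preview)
Your proposal is correct and matches the paper's treatment: the paper does not give a proof but simply cites the result as Theorem 7.1, Equation (7.4) of \cite{L01}, and the argument you outline (represent the process as $AZ$ with $Z$ standard Gaussian, observe the max is $\sigma$-Lipschitz, invoke the dimension-free Gaussian Lipschitz concentration inequality via log-Sobolev/Herbst or Borell isoperimetry) is exactly the standard proof one finds in Ledoux's book.
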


\noindent Note that by symmetry, $\max$ can be replaced by $\min$ in
Proposition \ref{prop:gaussian-concentration}, which is the version
that we will use. We now give a proof of Theorem
\ref{thm:cover-concentration}, closely following the proof of Theorem
1.2 in \cite{D14}.

\begin{proof}[Proof of Theorem \ref{thm:cover-concentration}.]
  We will prove Theorem \ref{thm:cover-concentration} in the slightly
  more general setting where $G = (V, E)$ is an electrical network. As
  before, define $\ctot = \sum_{x, y \in V} c_{xy}$.

  We first estimate $\taucov$ in terms of $\tau^+$. Let $\beta \ge 3$
  be a parameter to be specified later. In what follows, we will often
  use the fact that
  \[ R = \max_{x, y \in V} \E \(\eta_x - \eta_y\)^2 \ge \max_{x \in V} \E \eta_x^2. \]
  To prove an upper bound, let
  $t^+ = \frac{(M + \beta \sqrt{R})^2}{2}$, and define the event
  \[ E = \left\{ \min_{x \in V} \( \L_{\tau^+(t^+)}(x) + \frac{1}{2} \eta^2_x \) \ge \frac{\beta^2 R}{8} \right\}, \]
  where $\eta$ is an independent copy of the Gaussian free field as in
  Theorem \ref{thm:second-ray-knight}. We also have by Proposition
  \ref{prop:gaussian-concentration} that
  \[ \P \( \min_{x \in V} \frac{1}{2} \( \eta_x + \sqrt{2t^+} \)^2 \le \frac{\beta^2 R}{8} \) \le \P \( \min_{x \in V} \( \eta_x + \sqrt{2t^+} \) \le \frac{\beta \sqrt{R}}{2} \) \]
  \[ = \P \( \min_{x \in V} \eta_x \le -M - \frac{\beta \sqrt{R}}{2} \)  \le 2 e^{-\frac{\beta^2}{8}}, \]
  so that in light of the isomorphism theorem (Theorem
  \ref{thm:second-ray-knight}),
  \begin{equation} \label{eq:P(E)}
    \P \( E^c \) \le 2e^{-\frac{\beta^2}{8}}.
  \end{equation}

  \noindent Suppose now that $\taucov > \tau^+(t^+)$. Then, $\L_{\tau^+(t^+)}(x) =
  0$ for some $x \in V$. Since
  \[ \P \( \eta^2_x \ge \frac{\beta^2 R}{4} \) \le 2e^{-\frac{\beta^2}{8}} \]
  and $\eta$ is independent of the random walk, it follows that
  \begin{equation} \label{eq:P(E|tau)}
    \P \( E \mmid \taucov > \tau^+(t^+) \) \le 2e^{-\frac{\beta^2}{8}}.
  \end{equation}
  Combining equations (\ref{eq:P(E)}) and (\ref{eq:P(E|tau)}), we
  conclude that
  \[ \P \( \taucov > \tau^+(t^+) \) \le \frac{2e^{-\frac{\beta^2}{8}}}{1 - 2e^{-\frac{\beta^2}{8}}} \le 6e^{-\frac{\beta^2}{8}}. \]

  \noindent For the lower bound, let $t^- = \frac{(M - \beta
    \sqrt{R})^2}{2}$. By Theorem \ref{thm:domination}, we have
  \[ \P \( \taucov < \tau^+(t^-) \) = \P \( \min_{x \in V} \L_{\tau^+(t^-)}(x) > 0 \) \le \P \( \min_{x \in V} \( \eta_x + \sqrt{2t^-} \) > 0 \) \]
  \[ = \P \( \min_{x \in V} \eta_x > -M + \frac{\beta \sqrt{R}}{2} \) \le 2e^{-\frac{\beta^2}{2}}, \]
  where the last inequality follows again from Proposition
  \ref{prop:gaussian-concentration}.

  Combining the upper and lower bounds, it follows that
  \[ \P \( \tau^+(t^-) \le \taucov \le \tau^+(t^+) \) \ge 1 - 8e^{-\frac{\beta^2}{8}}. \]
  For $\lambda \ge 9$, we now take $\beta = \sqrt{\lambda}$. Note that
  \[ \ctot \cdot t^+ + \frac{1}{2} \( \sqrt{\lambda R t^+} + \lambda R \) \ctot = \frac{\ctot}{2} \( M^2 + 3 \sqrt{\lambda R} M + 3 \lambda R \) \]
  \[ \ctot \cdot t^- - \frac{1}{2} \( \sqrt{\lambda R t^-} + \lambda R \) \ctot = \frac{\ctot}{2} \( M^2 - 3 \sqrt{\lambda R} M - \lambda R \), \]
  so by Lemma \ref{lem:inverse-local-time-concentration},
  \[ \P \( \tau^+(t^+) \ge \frac{\ctot M^2}{2} + \frac{3 \ctot (\sqrt{\lambda R} M + \lambda R)}{2} \) \le 6 \exp \( - \frac{\lambda}{16} \) \]
  \[ \P \( \tau^+(t^-) \le \frac{\ctot M^2}{2} - \frac{3 \ctot (\sqrt{\lambda R} M + \lambda R)}{2} \) \le 6 \exp \( - \frac{\lambda}{16} \). \]
  We thus conclude that for $\lambda \ge 9$,
  \[ \P \( \left| \taucov - \frac{\ctot M^2}{2} \right| \ge \frac{3}{2} \ctot (\sqrt{\lambda R} M + \lambda R) \) \le 20 \exp \( - \frac{\lambda}{16} \). \]
  We obtain Theorem \ref{thm:cover-concentration} upon an appropriate
  rescaling of $\lambda$, noting that $\ctot = 2 |E|$ in the case
  where all conductances are $1$.
\end{proof}

\section{Acknowledgements} \label{sec:acknowledgements}

We are greatly indebted to Jian Ding for suggesting the problem and
valuable discussions. We also thank Amir Dembo, James Lee, and Yuval
Peres for very helpful feedback and advice at various stages.

\section{Appendix}

\subsection{Proof of Lemma \ref{lem:brownian-disk-avoidance}}

To break up the proof, we first establish a lemma.

\begin{lemma} \label{lem:brownian-disk-avoidance-helper}
  Let $r > 0$ be given, and consider any point $y \in \R^2$ such that
  $|y| > r$. Let $\{ W^y_t \}_{t \ge 0}$ be a standard planar Brownian
  motion started at $y$. Then,

  \[ \P \( \inf_{t \in [0, 1]} |W^y_t| \le r \) \le \inf_{0 \le \alpha \le |y|} 2 \( \frac{\log \alpha^{-1}}{\log r^{-1}} + \alpha^2 \). \]
\end{lemma}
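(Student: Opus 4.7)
My plan is to carry out a standard two-scale decomposition. Introduce the stopping times
\[ \tau_r = \inf\{t \ge 0 : |W^y_t| \le r\}, \qquad \sigma_R = \inf\{t \ge 0 : |W^y_t| \ge R\} \]
with $R = \alpha^{-1}$, which serves as a large outer radius. A union bound on the event $\{\tau_r \le 1\}$, according to whether the walk has first exited the disk of radius $R$, gives
\[ \P\bigl(\inf_{t \in [0,1]} |W^y_t| \le r\bigr) = \P(\tau_r \le 1) \;\le\; \P(\tau_r < \sigma_R) + \P(\sigma_R \le 1). \]
We may restrict attention to the regime $R > |y|$ (i.e. $\alpha < 1/|y|$) and $\alpha \le 1$, since otherwise the stated bound is already vacuous.

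For the first term, I invoke the classical harmonic-measure formula for planar Brownian motion. Since $\log|W^y_t|$ is a local martingale off the origin, optional stopping at $\tau_r \wedge \sigma_R$ yields
\[ \P(\tau_r < \sigma_R) = \frac{\log R - \log |y|}{\log R - \log r} = \frac{\log\alpha^{-1} - \log|y|}{\log\alpha^{-1} + \log r^{-1}}. \]
The hypothesis $\alpha \le |y|$ gives $-\log|y| \le \log\alpha^{-1}$, so the numerator is at most $2\log\alpha^{-1}$, while the denominator is at least $\log r^{-1}$ whenever $\alpha \le 1$. Hence $\P(\tau_r < \sigma_R) \le 2\cdot\frac{\log\alpha^{-1}}{\log r^{-1}}$.

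For the second term, I use that $|W^y_t - y|^2 - 2t$ is a martingale (because $W^y - y$ is a centered planar Brownian motion), so $|W^y_t - y|^2$ is a nonnegative submartingale. Doob's maximal inequality gives
\[ \P\Bigl(\sup_{t \le 1} |W^y_t - y|^2 \ge s^2\Bigr) \le \frac{\E|W^y_1 - y|^2}{s^2} = \frac{2}{s^2}. \]
Since $\sup_{t \le 1}|W^y_t| \ge R$ forces $\sup_{t \le 1}|W^y_t - y| \ge R - |y|$, this bounds $\P(\sigma_R \le 1)$ by $2/(\alpha^{-1} - |y|)^2$, which is $\le 2\alpha^2$ in the sub-regime $\alpha|y| \le 1/2$. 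Summing the two estimates and taking the infimum over admissible $\alpha$ yields the claim.

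The main obstacle is cosmetic rather than substantive: carefully matching the explicit constant $2$ in the stated bound and handling the edge cases where $\alpha$ is close to $|y|$ or to $1/|y|$. In each such edge case the stated bound is either vacuous or reduces to one of the two estimates above, so a careful case split (or a slightly sharper tail bound in place of Doob's inequality, e.g.\ via the reflection principle) closes the argument.
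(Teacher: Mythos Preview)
Your decomposition --- bound $\P(\tau_r\le 1)$ by $\P(\tau_r<\sigma_R)$ via the $\log|W_t|$ martingale plus $\P(\sigma_R\le 1)$ via a maximal inequality --- is exactly the paper's. The difference is that the paper first observes that $\P\bigl(\inf_{t\in[0,1]}|W^y_t|\le r\bigr)$ is decreasing in $|y|$, so for any $\alpha\le|y|$ one may move the starting point inward to radius $\alpha$, and hence it suffices to prove the bound only in the case $\alpha=|y|$. After this reduction one takes $R=s=1/|y|$; the non-vacuous case is then automatically $|y|<1$ (since $2\alpha^2=2|y|^2$), so $R>|y|$ always holds, and Doob applied directly to the submartingale $|W^y_t|^2$ gives a bound of order $|y|^2=\alpha^2$ with the right constant and no further case split.

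Without this monotonicity step, your edge-case analysis has a real gap. Your claim that the bound is vacuous whenever $\alpha\ge 1/|y|$ is false when $|y|>1$: for instance $|y|=10$, $\alpha=1/2$, $r$ small gives $2\bigl(\log 2/\log r^{-1}+1/4\bigr)\approx 1/2$, not $\ge 1$, yet $R=2<|y|$ so your union bound yields only the trivial $\P(\sigma_R\le 1)=1$. Your Doob step also does not deliver the stated constant: under $\alpha|y|\le 1/2$ one has $\alpha^{-1}-|y|\ge \alpha^{-1}/2$, hence $2/(\alpha^{-1}-|y|)^2\le 8\alpha^2$, not $2\alpha^2$; and the intermediate range $1/(2|y|)<\alpha<1/|y|$ (again with $|y|>1$) is covered by neither your vacuousness claim nor your sub-regime. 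These are not merely cosmetic, and your closing paragraph does not actually supply the missing argument. The fix is precisely the paper's monotonicity reduction to $\alpha=|y|$.
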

\begin{proof}
  Note that $\P \( \inf_{t \in [0, 1]} |W^y_t| \le r \)$ is decreasing
  in $|y|$, so it suffices to show the inequality only for $\alpha =
  |y|$. Let $s = \frac{1}{|y|}$. Define two stopping times

  \[ T = \inf \{ t \ge 0 : |W^y_t| \not\in [r, s] \} \]
  \[ T' = \inf \{ t \ge 0 : |W^y_t| \not\in [r, \infty) \} \]

  Now, consider the stopped martingale $X_t = \log |W^y_{T \wedge t}|$,
  noting that $X_0 = \log |y|$ and $X_t \in [\log r, \log s]$. By the martingale
  property, we have

  \[ \P \( X_1 = \log r \) \le \frac{\log s - \log |y|}{\log s - \log r} = \frac{2 \log |y|^{-1}}{\log |y|^{-1} + \log r^{-1}} \le \frac{2 \log |y|^{-1}}{\log r^{-1}}. \]
  Moreover, by Doob's maximal inequality\footnote{We use Doob's
    maximal inequality for brevity only. Other methods such as the
    reflection principle would serve just as well; the bound on $\sup
    |W^y_t|$ does not need to be sharp for our purposes.} on the
  submartingale $|W^y_t|^2$,
  \[ \P \Big( \min(T, 1) \ne \min(T', 1) \Big) \le \P \( \sup_{t \in [0, 1]} |W^y_t| \ge s \) \le \frac{|y|^2 + 1}{s^2} \le 2 |y|^2. \]
  It follows that
  \[ \P \( \inf_{t \in [0, 1]} |W^y_t| \le r \) = \P \Big( T' \le 1 \Big) \le \P \Big( X_1 = \log r \text{ or } \min(T, 1) \ne \min(T', 1) \Big) \]
  \[ \le 2 \( \frac{\log |y|^{-1}}{\log r^{-1}} + |y|^2 \), \]
  as desired.
\end{proof}

\begin{proof}[Proof of Lemma \ref{lem:brownian-disk-avoidance}]
  Define $\lambda' = \lambda^{\frac{1}{\log \epsilon^{-1}}}$. Recall
  that the probability density of the standard two-dimensional
  Gaussian is bounded above by $\frac{1}{2 \pi}$, and so the
  probability density of $W_\epsilon$ is bounded above by $\frac{1}{2
    \pi \epsilon}$. Thus,
  \[ \P \( |W_\epsilon|^2 \le \lambda' \) \le \frac{1}{2 \pi \epsilon} \cdot \pi \lambda' = \frac{1}{2 \epsilon} \exp \( - \frac{\log \lambda^{-1}}{\log
    \epsilon^{-1}} \). \]

  We now apply Lemma \ref{lem:brownian-disk-avoidance-helper} with $y =
  W_\epsilon$, $r = \sqrt{\lambda}$, and taking $\alpha =
  \sqrt{\lambda'}$ in the infimum. This gives
  \[ \P \( \inf_{\epsilon \le t \le 1} |W_t|^2 < \lambda \mmid |W_\epsilon|^2 \ge \lambda' \) \le 2 \( \frac{\log \lambda'^{-1}}{\log \lambda^{-1}} + \lambda' \) \]
  \[ = \frac{2}{\log \epsilon^{-1}} + 2 \exp \( - \frac{\log \lambda^{-1}}{\log
    \epsilon^{-1}} \) \le \frac{2}{\log \epsilon^{-1}} + \frac{5}{2
    \epsilon} \exp \( - \frac{\log \lambda^{-1}}{\log \epsilon^{-1}} \). \]
  This along with the previous inequality proves the corollary.
\end{proof}

\subsection{Proof of Lemma \ref{lem:conditioned-path}}

\begin{proof}
  Define
  \[ f(x) = \left\{
  \begin{array}{lr}
    x & : 0 \le x \le N \\
    N + \frac{1}{r} & : x = N + 1
  \end{array}
  \right. \]

  Note that $f(X)$ is a martingale. Thus, for a walk started at $k$,
  the probability of hitting $0$ before $N + 1$ is
  \[ \frac{f(N + 1) - f(k)}{f(N + 1) - f(0)} = \frac{N - k + \frac{1}{r}}{N + \frac{1}{r}}.\]
  It follows that for $1 \le k < N$,
  \[ \frac{\P \Big( X_{t + 1} = k + 1 \,\Big|\, X_t = k, X_\tau = 0 \Big)}{\P \Big( X_{t + 1} = k - 1 \,\Big|\, X_t = k, X_\tau = 0 \Big)} = \frac{N - k - 1 + \frac{1}{r}}{N - k + 1 + \frac{1}{r}} = \frac{c'_{k, k + 1}}{c'_{k - 1, k}}, \]
  where
  \[ c'_{k, k + 1} = \frac{\(N - k - 1 + \frac{1}{r}\)\(N - k + \frac{1}{r}\)}{\frac{1}{r}\(1 + \frac{1}{r}\)}. \]

  Thus, the transition probabilities of $X$ conditioned on $X_\tau =
  0$ are exactly the unconditioned transition probabilities of
  $Y$. Consequently, their paths have the same distribution.
\end{proof}

\subsection{Proof of Lemma \ref{lem:exp-conc}}

\begin{proof}
  For any $t < \frac{1}{\mu}$, we have by direct calculation
  \[ \log \( \E \exp \( t \sum_{i = 1}^N X_i \) \) = N \log \( \frac{1}{1 - \mu t} \). \]
  \noindent If in fact $|t| \le \frac{\alpha}{2 \mu}$, we have
  \[ \log \( \frac{1}{1 - \mu t} \) = \sum_{k = 1}^\infty \frac{\mu^k t^k}{k} \le \mu t + \mu^2 t^2 = \mu t (1 + 2 \mu t) - \mu^2 t^2. \]
  \noindent and so
  \[ \frac{\E \exp \( t \sum_{i = 1}^N X_i \)}{\exp \( t(1 + 2 \mu t)\mu N \) } \le e^{-\mu^2 t^2 N}. \]
  \noindent By Markov's inequality with $t = \frac{\alpha}{2 \mu}$ and $t = - \frac{\alpha}{2 \mu}$, we obtain
  \[ \P\( (1 - \alpha) \mu N \le \sum_{i = 1}^N X_i \le (1 + \alpha) \mu N \) \le 2e^{-\frac{1}{4} \alpha^2 N}. \]
\end{proof}

\end{document}